\newcolumntype{C}[1]{>{\centering\let\newline\\\arraybackslash\hspace{0pt}}m{#1}}
\theoremstyle{plain}
\newtheorem{theorem}{Theorem}[section]
\newtheorem{proposition}[theorem]{Proposition}
\newtheorem{lemma}[theorem]{Lemma}
\newtheorem{conjecture}[theorem]{Conjecture}
\newtheorem{observation}[theorem]{Observation}
\theoremstyle{definition}
\newtheorem{definition}[theorem]{Definition}
\newtheorem{question}[theorem]{Question}
\numberwithin{equation}{section}
\numberwithin{table}{section}
\numberwithin{figure}{section}
\newcommand{\msr}[1]{{\rm msr}(#1)}
\newcommand{\zplus}[1]{Z_+(#1)}
\DeclareMathOperator{\rank}{rank}
\title
[
Orthogonal representations of Steiner triple system incidence graphs
]
{
Orthogonal representations of Steiner triple system \\ incidence graphs
}
\author{Louis Deaett}
\address{Department of Mathematics, Quinnipiac University, Hamden, CT 06518, USA.}
\author{H.\ Tracy Hall}
\address{Department of Mathematics, Brigham Young University, Provo, UT 84602, USA}
\keywords{faithful orthogonal representation; Heawood graph; Steiner triple system; minimum rank problem; minimum semidefinite rank}
\subjclass[2010]{Primary: 52C99; Secondary: 05C50}
\begin{document}
\maketitle

\begin{abstract}
The unique Steiner triple system of order $7$ has a point-block incidence graph known as the Heawood graph.  Motivated by questions in combinatorial matrix theory, we consider the problem of constructing a faithful orthogonal representation of this graph, i.e., an assignment of a vector in $\mathbb C^d$ to each vertex such that two vertices are adjacent precisely when assigned nonorthogonal vectors. We show that $d=10$ is the smallest number of dimensions in which such a representation exists, a value known as the \textit{minimum semidefinite rank} of the graph, and give such a representation in $10$ real dimensions. We then show how the same approach gives a lower bound on this parameter for the incidence graph of any Steiner triple system, and highlight some questions concerning the general upper bound.
\end{abstract}

\begin{center}\today\end{center}

\section{Introduction}
\label{sec:intro}

Fundamental to what follows is the idea of assigning a vector to each vertex of a graph so that the inner products among the vectors in some way reflect the adjacency relation on the vertices.  A geometric representation of this sort may then be useful in studying properties of the graph.  This approach dates back at least to the celebrated work of Lov\'asz \cite{lovasz} in determining the Shannon capacity of the $5$-cycle;  see also \cite{parsons} for a unifying discussion.  The following definition provides one realization of this idea.

\begin{definition}\label{def:orth_rep}
Let $G$ be a graph and $X$ be an inner product space.  An \textit{orthogonal representation of $G$ in $X$} is a function $r:V(G) \rightarrow X$ such that two vertices of $G$ are adjacent if and only if they are mapped by $r$ to nonorthogonal vectors, i.e., for any distinct $u,v \in V(G)$,
\begin{equation}\label{eqn:orth_rep_condition}
\langle r(u), r(v) \rangle \neq 0
\, \Longleftrightarrow \,
\{u,v\} \in E(G).
\end{equation}
\end{definition}

We note that the notion requiring only the forward direction of \eqref{eqn:orth_rep_condition} has received a good deal of attention; many authors would refer to the notion set out in Definition \ref{def:orth_rep} as that of a \textit{faithful} orthogonal representation. Also, what some authors would consider an orthogonal representation of $G$ would be considered by others to be an orthogonal representation of the complement of $G$. Variations of this sort must be kept in mind when considering the related literature; some results derived in the context of different such choices are surveyed in each of \cite{haynes} and \cite{parsons}.

The particular notion of an orthogonal representation given by Definition \ref{def:orth_rep} has relevance to combinatorial matrix theory in the context of certain variants of the \emph{minimum rank problem}, which, broadly construed, calls for finding the smallest possible rank among all matrices meeting a given combinatorial description.  Instances of this problem arise naturally in applications such as computational complexity theory \cite{deaett_srinivasan} and quantum information theory \cite{scarpa_severini}.  Often, a matrix is first required to be symmetric (or Hermitian) and then further conditions are imposed in terms of the graph whose edges correspond to the locations of the off-diagonal nonzero entries of the matrix. This is made precise by the following definition.  Note that, in all that follows, we denote by $A_{ij}$ the entry in row $i$ and column $j$ of matrix $A$.

\begin{definition}
Let $A$ be an $n\times n$ Hermitian matrix.  The \textit{graph of $A$} is the unique simple graph on vertices $v_1,\ldots,v_n$ such that, for every $i\not= j$, vertices $v_i$ and $v_j$ are adjacent if and only if $A_{ij} \neq 0$.
\end{definition}

The associated minimum rank problem is to determine the smallest rank among the Hermitian (or real symmetric) matrices with a fixed graph,
a value known as the \textit{minimum rank} of the graph. This problem has received considerable attention in combinatorial matrix theory; see \cite{handbook_min_rank_chapter} for a survey.  The present work bears on a variant of the problem in which only positive semidefinite Hermitian matrices are considered.  In particular, we study the graph invariant defined as follows.

\begin{definition}\label{def:msr}
Let $G$ be a simple graph on $n$ vertices.  The \textit{minimum semidefinite rank} of $G$ is the smallest rank among all positive semidefinite Hermitian matrices with graph $G$.  This value is denoted by $\msr{G}$.
\end{definition}

The following observation connects this variant of the minimum rank problem with the notion of an orthogonal representation.  It
 is a simple consequence of the characterization of positive semidefinite matrices as Gram matrices.

\begin{observation}
The smallest $d$ such that $G$ has an orthogonal representation in $\mathbb C^d$ is $d=\msr{G}$.
\end{observation}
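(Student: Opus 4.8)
The plan is to exploit the standard correspondence between positive semidefinite matrices and Gram matrices of vector families. Recall that an $n \times n$ Hermitian matrix $A$ is positive semidefinite if and only if there exist vectors $w_1, \ldots, w_n$ in some inner product space with $A_{ij} = \langle w_i, w_j \rangle$ for all $i,j$; moreover, in any such factorization the rank of $A$ equals the dimension of $\operatorname{span}\{w_1, \ldots, w_n\}$, and one may always realize $A$ using vectors spanning a space of dimension exactly $\rank A$. This is the only ingredient required.

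First I would set up the dictionary. Given a function $r : V(G) \to \mathbb{C}^d$, define the Hermitian matrix $A$ by $A_{ij} = \langle r(v_i), r(v_j) \rangle$; this $A$ is automatically positive semidefinite. The defining condition \eqref{eqn:orth_rep_condition} for $r$ to be an orthogonal representation asserts exactly that, for $i \neq j$, the entry $A_{ij}$ is nonzero precisely when $\{v_i, v_j\} \in E(G)$ --- that is, that the graph of $A$ is $G$. Conversely, any positive semidefinite matrix with graph $G$ arises in this way from some such $r$, via a Gram factorization. Thus orthogonal representations of $G$ and positive semidefinite matrices with graph $G$ are in correspondence, with the ambient dimension $d$ on one side matching the rank of $A$ on the other, up to the slack noted below.

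With this in hand, the two inequalities follow directly. Writing $d^\ast$ for the smallest $d$ admitting an orthogonal representation of $G$ in $\mathbb{C}^d$: on the one hand, a positive semidefinite matrix $A$ with graph $G$ and $\rank A = \msr{G}$ can be factored as a Gram matrix of vectors spanning $\mathbb{C}^{\msr{G}}$, and these vectors constitute an orthogonal representation in $\mathbb{C}^{\msr{G}}$, giving $d^\ast \le \msr{G}$. On the other hand, any orthogonal representation of $G$ in $\mathbb{C}^{d^\ast}$ yields, via its Gram matrix, a positive semidefinite matrix with graph $G$ and rank at most $d^\ast$, giving $\msr{G} \le d^\ast$. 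Hence $d^\ast = \msr{G}$.

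I do not anticipate a genuine obstacle here, as the statement is essentially a restatement of the Gram matrix characterization. The one point requiring a moment's care is the treatment of isolated vertices: a vertex may be assigned the zero vector in an orthogonal representation, corresponding to a zero row and column of $A$ (and hence a zero diagonal entry), which is permitted since the graph of $A$ constrains only the off-diagonal pattern. Confirming that this freedom is consistent on both sides of the correspondence completes the argument.
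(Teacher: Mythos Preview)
Your proposal is correct and matches the paper's own treatment: the paper states this observation without proof, noting only that it is ``a simple consequence of the characterization of positive semidefinite matrices as Gram matrices,'' which is precisely the correspondence you spell out.
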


The smallest $d$ allowing an orthogonal representation of $G$ in $\mathbb R^d$ may also be of interest, in which case the minimum of Definition \ref{def:msr} can be taken over the real symmetric matrices; we refer to this value as the \textit{minimum semidefinite rank of $G$ over $\mathbb R$}.

The ordinary Laplacian matrix of a graph $G$ on $n$ vertices shows $\msr{G}$ to be well-defined and at most $n-1$.  In addition, the minimum semidefinite rank is additive on the connected components of a graph, so that it is sufficient to consider connected graphs only.  The question of how combinatorial properties of a graph relate to its minimum semidefinite rank has received a good deal of interest; see, e.g., \cite{booth_et_al_2011, booth_et_al_2008} and \cite[Section 46.3]{handbook_min_rank_chapter}.  One simple result is the following.

\begin{theorem}[\cite{van_der_holst}]\label{thm:random_msr_facts}
	If $G$ is a cycle on $n$ vertices, then $\msr{G} = n-2$.
\end{theorem}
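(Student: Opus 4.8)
The plan is to prove the two inequalities $\msr{C_n}\ge n-2$ and $\msr{C_n}\le n-2$ separately, working throughout with orthogonal representations via the Observation above. Label the cycle vertices $1,\dots,n$ with $i$ adjacent to $i\pm 1$ modulo $n$. For the lower bound, suppose $v_1,\dots,v_n$ is an arbitrary orthogonal representation of $C_n$ in some $\mathbb{C}^d$, and let $A$ be its Gram matrix, so that $A$ is positive semidefinite with graph $C_n$ and $\rank A=d$. The heart of the argument is that a null vector of $A$ obeys a three-term recurrence. Since every vertex has a neighbor, each $v_i$ is nonzero, so $A_{ii}=\langle v_i,v_i\rangle>0$; moreover $A_{i,i-1}$ and $A_{i,i+1}$ are nonzero while $A_{ij}=0$ for every other $j\ne i$. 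Hence for $c\in\ker A$ the equation $(Ac)_i=0$ reads
\begin{equation*}
A_{i,i-1}\,c_{i-1}+A_{ii}\,c_i+A_{i,i+1}\,c_{i+1}=0.
\end{equation*}
Because $A_{i,i+1}\ne 0$, this determines $c_{i+1}$ from $c_{i-1}$ and $c_i$. Applying it for $i=2,\dots,n-1$ shows that the whole vector $c$ is determined by the pair $(c_1,c_2)$, so $c\mapsto(c_1,c_2)$ embeds $\ker A$ into $\mathbb{C}^2$. Thus $\dim\ker A\le 2$, whence $\rank A\ge n-2$; as the representation was arbitrary, $\msr{C_n}\ge n-2$. (The two leftover equations, at $i=1$ and $i=n$, only impose further constraints and are not needed here.)

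For the upper bound I would exhibit a single positive semidefinite matrix with graph $C_n$ and rank exactly $n-2$, that is, nullity exactly $2$. The natural candidate is a Hermitian circulant $A=a\,I+z\,P+\bar z\,P^{\,n-1}$, where $P$ is the cyclic shift and $a>0$; its eigenvalues are $a+2|z|\cos(2\pi k/n+\phi)$ for $k=0,\dots,n-1$, with $\phi=\arg z$. Taking $|z|>0$ keeps the adjacency entries nonzero, so the graph really is $C_n$ (including the wrap-around edge), and the task is to tune $a$ and $\phi$ so that the smallest eigenvalue is $0$ and is attained with multiplicity exactly $2$.

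The step I expect to be the main obstacle is precisely this tuning, since it is sensitive to the parity of $n$. For odd $n$ one may take $\phi=0$: the minimum of $\cos(2\pi k/n)$ occurs at $k=\tfrac{n\pm1}{2}$ with multiplicity $2$, and setting $a=2|z|\cos(\pi/n)$ sends exactly those two eigenvalues to $0$ while keeping the rest positive. For even $n$ the choice $\phi=0$ fails, because then $\cos(2\pi k/n)$ attains its minimum $-1$ at the single index $k=n/2$, giving rank $n-1$ rather than $n-2$. The remedy is to insert a phase, say $\phi=\pi/n$, so that the eigenvalues become $a+2|z|\cos\bigl((2k+1)\pi/n\bigr)$; now the minimum $-2|z|\cos(\pi/n)$ is attained at the two indices $k=\tfrac n2-1$ and $k=\tfrac n2$, and $a=2|z|\cos(\pi/n)$ again yields nullity exactly $2$. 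If a representation over $\mathbb{R}$ is wanted, the same effect is obtained by a real symmetric weighting of $C_n$ in which an odd number of edge weights are negative (an \emph{antiperiodic} cycle), the sign pattern being the real analogue of the phase $\phi$. Confirming that these matrices are positive semidefinite of the stated rank is then a routine eigenvalue computation.
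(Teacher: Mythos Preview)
The paper does not supply its own proof of this theorem; it is merely quoted with a citation to \cite{van_der_holst}. So there is nothing in the paper to compare your argument against, and the relevant question is simply whether your proof stands on its own.

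It does. Your lower bound is the standard three-term recurrence argument: for any Hermitian matrix $A$ with graph $C_n$ (positive semidefiniteness is not even needed here), the nonvanishing of the superdiagonal entries forces any null vector to be determined by its first two coordinates, so $\dim\ker A\le 2$ and $\rank A\ge n-2$. Your upper-bound construction via a circulant $aI+zP+\bar z P^{n-1}$ is also correct, and your parity analysis is right: for odd $n$ the real choice $z>0$, $a=2z\cos(\pi/n)$ already gives a double zero eigenvalue, while for even $n$ one needs the phase $\phi=\pi/n$ (or, over $\mathbb{R}$, a single sign flip on one edge weight, which is unitarily similar to that complex circulant via a diagonal matrix of $2n$th roots of unity) to force the minimum cosine to be attained twice. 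The verification that the resulting matrix is positive semidefinite with graph exactly $C_n$ is then routine, as you say.

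One small stylistic point: the observation that $A_{ii}>0$ is not actually used in your recurrence; only $A_{i,i+1}\ne 0$ matters, and that is guaranteed directly by the definition of the graph of $A$.
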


The motivation for the present work begins with a result of \cite{rosenfeld} that can be recast as follows.

\begin{theorem}[\cite{rosenfeld}]\label{thm:rosenfeld}
If $G$ is a connected triangle-free graph on $n \ge 2$ vertices, then $\msr{G} \ge \frac 12 n$.
\end{theorem}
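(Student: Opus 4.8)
The plan is to work with a minimal positive semidefinite realization and reduce the claim to the statement that the nullity of such a matrix is at most its rank. Let $A$ be a positive semidefinite Hermitian matrix with graph $G$ and $\rank A = \msr{G} =: d$. Because $G$ is connected with $n \ge 2$ vertices it has no isolated vertex, so no row of $A$ vanishes; since a positive semidefinite matrix with a zero diagonal entry has the corresponding row equal to zero (its $2\times 2$ principal minors force this), every diagonal entry $A_{ii}$ is strictly positive. Conjugating $A$ by the positive diagonal matrix $\operatorname{diag}(A_{ii})^{-1/2}$ leaves the rank, positive semidefiniteness, and graph unchanged, so I may assume every diagonal entry equals $1$. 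Write $A = I + E$, where $E$ is Hermitian, has zero diagonal, and has off-diagonal support exactly the edge set of $G$. Since $n = \rank A + \operatorname{null} A$, the inequality $\msr{G} \ge \tfrac12 n$ is equivalent to $\operatorname{null} A \le \rank A$.

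Next I would extract two trace identities for $E$. The eigenvalues of $A$ are $0$ with multiplicity $m := \operatorname{null} A = n - d$ together with positive reals $\lambda_1,\dots,\lambda_d$, so the eigenvalues of $E = A - I$ are $-1$ with multiplicity $m$ and the values $\nu_i := \lambda_i - 1 > -1$ for $1 \le i \le d$. Because $E$ has zero diagonal, $\operatorname{tr} E = 0$, which gives $\sum_{i=1}^d \nu_i = m$. The key use of the hypothesis is the cube: $\operatorname{tr}(E^3) = \sum_{a,b,c} E_{ab}E_{bc}E_{ca}$ is a weighted count of closed walks of length $3$, and since $E$ has zero diagonal every nonzero term requires three distinct pairwise-adjacent vertices, i.e.\ a triangle. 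As $G$ is triangle-free, $\operatorname{tr}(E^3) = 0$, and reading this off the spectrum gives $\sum_{i=1}^d \nu_i^3 = m$.

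Finally I would convert these two identities into the desired count through a single pointwise inequality. The factorization
\[
\nu^3 - \nu - 2(\nu - 1) = (\nu - 1)^2(\nu + 2)
\]
shows that $\nu^3 - \nu \ge 2(\nu - 1)$ for every $\nu \ge -2$, in particular for each $\nu_i > -1$. Summing over $i$ and substituting $\sum \nu_i^3 = \sum \nu_i = m$ yields
\[
0 = \sum_{i=1}^d (\nu_i^3 - \nu_i) \ge 2 \sum_{i=1}^d (\nu_i - 1) = 2(m - d),
\]
so $m \le d$, which is exactly $\operatorname{null} A \le \rank A$ and hence $\msr{G} \ge \tfrac12 n$. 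I expect the main obstacle to be locating this last inequality: the two trace identities by themselves look like a single linear relation among the $\nu_i$, and the content is recognizing that the correct tangent-line bound to $\nu^3 - \nu$ at $\nu = 1$ (forcing the factor $(\nu-1)^2$) is nonnegative precisely on the range $\nu \ge -2$ guaranteed by positive semidefiniteness. Recognizing that triangle-freeness should enter through $\operatorname{tr}(E^3)$, rather than through the combinatorics of an explicit vector representation, is the other conceptual step.
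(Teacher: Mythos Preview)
Your argument is correct. The paper itself does not give a proof of this theorem; it is simply quoted from \cite{rosenfeld} as background motivation, so there is no in-paper proof to compare against. That said, your approach---normalize the Gram matrix to $A = I + E$, read off $\operatorname{tr} E = 0$ and (via triangle-freeness) $\operatorname{tr} E^3 = 0$, and then exploit the factorization $(\nu-1)^2(\nu+2) \ge 0$ on the spectrum of $E$---is essentially the classical Rosenfeld argument, so you have independently reconstructed the cited proof rather than found a genuinely new route. Every step checks: the diagonal normalization is legitimate because connectedness with $n\ge 2$ forces nonzero diagonal entries, the vanishing of $\operatorname{tr}(E^3)$ really does follow from the zero diagonal together with the absence of triangles, and the polynomial inequality is valid on the range $\nu_i > -1$ guaranteed by positive semidefiniteness.
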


The question of how Theorem \ref{thm:rosenfeld} might generalize has received some attention.  For instance, the implications of replacing the triangle-free condition with a larger upper bound on the clique number are explored in \cite{furedi}.  In \cite{deaett_msr}, it was observed that (except in trivial cases) a graph meeting the lower bound of Theorem \ref{thm:rosenfeld} must have a girth of $4$, suggesting that, in seeking a generalization, the condition that the graph be triangle-free be viewed as a lower bound on its girth.  In particular, the following conjecture was put forward.

\begin{conjecture}[\cite{deaett_msr}]\label{conj:girth_conjecture}
Suppose $G$ is a connected graph on $n \ge 2$ vertices and $k$ is an integer with $k \ge 4$.  If $G$ has girth at least $k$, then
$\msr{G} \ge \left(\frac{k-2}k\right)\!n$.
\end{conjecture}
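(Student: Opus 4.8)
The plan is to attack the conjecture through the maximum positive semidefinite nullity. Writing $M_+(G)$ for the largest nullity of a positive semidefinite Hermitian matrix with graph $G$, one has $\msr{G} = n - M_+(G)$, so the conjecture is equivalent to the bound $M_+(G) \le \frac{2}{k}\,n$ for every connected graph $G$ on $n \ge 2$ vertices of girth at least $k$. I would first record that the case $k=4$ is exactly Theorem \ref{thm:rosenfeld}, since girth at least $4$ is the same as being triangle-free; this fixes the base case of the conjecture. A convenient intermediate target is the positive semidefinite zero forcing number $\zplus{G}$, which satisfies $M_+(G) \le \zplus{G}$; it would therefore suffice to prove the purely combinatorial inequality $\zplus{G} \le \frac{2}{k}\,n$ under the girth hypothesis, after which $\msr{G} \ge n - \zplus{G} \ge \frac{k-2}{k}\,n$ follows immediately.

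The structural engine would be the fact that a graph of girth at least $k$ is locally a tree: the ball of radius $\lfloor (k-1)/2\rfloor$ about any vertex induces a forest. I would combine this with two observations about a null vector $x$ of a positive semidefinite matrix $A$ with graph $G$. First, since a zero diagonal entry of a positive semidefinite matrix forces the whole corresponding row to vanish (impossible for a vertex with a neighbor), every diagonal entry is positive; consequently the support of $x$ can contain no vertex that is isolated within the induced subgraph on that support. Second, at a vertex $i$ that is a leaf of the induced subgraph on $\mathrm{supp}(x)$, with unique support-neighbor $j$, the defining equation reduces to $A_{ii}x_i + A_{ij}x_j = 0$, which propagates the value of $x$ along that single incident edge. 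Iterating this propagation through the locally tree-like neighborhood, one aims to show that a null vector of minimal support cannot be confined to fewer than $k$ vertices without forcing a short cycle, and hence that minimal supports are large. Translating ``every minimal support is large'' into the global dimension bound $M_+(G) \le \frac{2}{k}\,n$ is then a covering/packing problem: one must argue that a basis of the null space can be chosen from such supports disjointly enough that the total vertex count controls the dimension.

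An alternative, quantitative route is to pass to the Gram vectors $v_1,\dots,v_n \in \mathbb C^r$ of $A$, rescaled (by conjugating $A$ with a positive diagonal matrix) so that $\|v_i\| = 1$, and to analyze the frame operator $S = \sum_i v_i v_i^{*}$. Here $\rank S = r$, $\operatorname{tr} S = n$, and $\operatorname{tr} S^2 = n + 2\sum_{\{i,j\}\in E}|\langle v_i,v_j\rangle|^2$, so the Cauchy--Schwarz rank bound $r \ge (\operatorname{tr} S)^2/\operatorname{tr} S^2$ turns the problem into controlling the edge inner products. The difficulty is that girth alone does not force these individual inner products to be small, so the second-moment estimate is too crude; the natural fix is to bring in higher traces $\operatorname{tr} S^t$, whose expansions count closed walks and are genuinely constrained by the absence of short cycles, in the spirit of spectral girth bounds.

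The step I expect to be the decisive obstacle is exactly the passage from local (tree-like) rigidity to the global fraction $\frac{k-2}{k}$: for $k=4$ the bound amounts to each forcing step, or each minimal support, accounting for two vertices, which Rosenfeld's argument captures, but for $k \ge 6$ the overlaps among supports and the merely local nature of the tree structure make a clean packing argument elusive --- which is presumably why the statement remains a conjecture. This is also why I would not expect the general bound to fall out of these ideas directly, and why it is natural to retreat to the highly structured case of Steiner triple system incidence graphs --- girth $6$, with the additional rigidity that any two points lie in a unique block --- where the design axioms can supply the global control that the generic girth hypothesis does not.
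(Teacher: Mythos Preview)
The statement you are attempting to prove is labeled a \emph{conjecture} in the paper, and the paper contains no proof of it; it is cited from \cite{deaett_msr} purely as motivation. So there is no ``paper's own proof'' to compare against, and your proposal should be read as a research outline rather than a proof. On that reading, most of your heuristics are reasonable and you correctly flag the main obstruction in the last paragraph.

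There is, however, one concrete misstep worth highlighting. You suggest that it would suffice to establish the purely combinatorial inequality $\zplus{G} \le \tfrac{2}{k}\,n$, after which $\msr{G} \ge n - \zplus{G} \ge \tfrac{k-2}{k}\,n$ would follow. But this route is already blocked by the very example the paper is about: for the Heawood graph $H$ one has $k=6$, $n=14$, and the paper reports $\zplus{H}=5$, whereas $\tfrac{2}{6}\cdot 14 = \tfrac{14}{3} < 5$. Thus the proposed inequality $\zplus{G} \le \tfrac{2}{k}\,n$ is false in general, and the zero forcing bound $n-\zplus{H}=9$ falls strictly below the conjectured threshold $\tfrac{k-2}{k}\,n=\tfrac{28}{3}$. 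This is precisely why the paper needs an argument that goes beyond zero forcing --- the Riemann sphere/coloring method of Section~\ref{sec:heawood_msr} --- to push the lower bound from $9$ up to $10$. Any attack on Conjecture~\ref{conj:girth_conjecture} via $\zplus{G}$ alone is therefore a dead end, not merely a difficult one.

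Your second route, via the frame operator and higher traces $\operatorname{tr} S^t$, is a more plausible direction, and your diagnosis that second moments are too crude is correct. But as you yourself note, none of this amounts to a proof; the conjecture remains open.
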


In light of Theorem \ref{thm:random_msr_facts}, this conjecture may be viewed as asserting that, among the connected graphs of girth at least $k$, the minimum semidefinite rank as a fraction of the number of vertices is minimized by the $k$-cycle.  While Conjecture \ref{conj:girth_conjecture} was found to hold for all graphs on at most $7$ vertices, it was suggested that a revealing test case might be provided by the \textit{cage graphs}, defined as follows.

\begin{definition}
	A graph that has girth $g$ in which each vertex has degree $d$ is called a \textit{$(d,g)$-cage} when no graph on fewer vertices has both of those properties.
\end{definition}

The well-known Petersen graph, with $10$ vertices, is the unique $(3,5)$-cage.  Its minimum semidefinite rank is $6$, meeting the lower bound of Conjecture \ref{conj:girth_conjecture}.  There is also a unique $(3,6)$-cage, known as the Heawood graph, with $14$ vertices, shown in Figure \ref{fig:heawood_graph}.   Conjecture \ref{conj:girth_conjecture} would require its minimum semidefinite rank to be at least $10$.

\begin{figure}[h]
\begin{tikzpicture}

\def \r {1.75}
\def \n {7}
\def \pt_mark_radius {1.75}

\foreach \s in {0,...,6}
{
	\coordinate[coordinate] (A\s) at ({360/\n * \s + 80} : \r);
    \coordinate[coordinate] (B\s) at ({360/\n * \s + 100} : \r);
	\path[draw, fill] (A\s) circle[radius=\pt_mark_radius pt];
	\path[draw, fill] (B\s) circle[radius=\pt_mark_radius pt];
}

\foreach \t in {0,...,6}
{
	\draw[thick] (A\t) -- (B\t);
	\draw[thick] let \n1={int(mod(\t+2,7))} in (A\t) -- (B\n1);
	\draw[thick] let \n1={int(mod(\t+6,7))} in (A\t) -- (B\n1);
}

\end{tikzpicture}
\caption{The Heawood graph.\label{fig:heawood_graph}}
\end{figure}
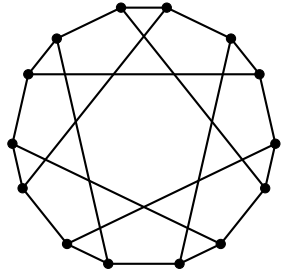

Unfortunately, the problem of determining for a given graph the value of its minimum rank or minimum semidefinite rank may be very difficult. One of the few general techniques (introduced in \cite{original_ZF_paper}) that is available for lower-bounding the minimum rank involves computing a \textit{zero forcing parameter} for the graph. Such a parameter gives an upper bound on the dimension of the null space, and hence a lower bound on the rank, of a matrix by exploiting how the graph of the matrix constrains the zero-nonzero patterns (i.e., supports) that may occur among its null vectors.

A variant of the zero forcing technique specific to the positive semidefinite case was introduced in \cite{barioli_et_al}.  Namely, the \textit{positive semidefinite zero forcing number} of $G$, denoted $\zplus{G}$, is defined for any graph $G$.   While a precise definition of $\zplus{G}$ is beyond the scope of this paper, we note that the definition is purely combinatorial, so that $\zplus{G}$ may be considered from a strictly graph-theoretic perspective.  (See, e.g., \cite{psd_zf_paper}.)  Nevertheless, $\msr{G} \ge n - \zplus{G}$ for every graph $G$, though the gap may be arbitrarily large \cite{mitchell_et_al}.

With $H$ denoting the Heawood graph, computer calculation using \cite{sage_min_rank_library} gives $Z_+(H) = 5$, implying that $\msr{H} \ge 9$. The results developed in this paper show that in fact $\msr{H}=10$. This is accomplished via a geometric approach that may be applied to the graph describing the incidence structure of any Steiner triple system.  The Heawood graph is one such graph; we treat the general case in Section \ref{sec:generalization}.

The remainder of this paper is organized as follows.  Section \ref{sec:heawood} presents the necessary background regarding the Heawood graph and its relevant connections with other mathematical objects.  Section \ref{sec:heawood_msr} develops the main results of the paper, establishing upper and lower bounds on the minimum semidefinite rank of the Heawood graph.  Section \ref{sec:generalization} explores how the same approach may be applied to the incidence graph of any Steiner triple system, and exhibits further bounds derived by this method.  Finally, Section \ref{sec:conclusion} highlights some questions suggested by this work and possible directions for future research.

\section{The Heawood graph}\label{sec:heawood}

The Heawood graph, shown in Figure \ref{fig:heawood_graph}, has served as an important example in the study of minimum rank problems.  For example, the complement of this graph was used in \cite{barioli_et_al_JGT} to give separation between various minimum rank parameters and corresponding combinatorial bounds.  Also, \cite{canto} presented the first example of a zero-nonzero pattern for which the minimum rank (over the reals) was unequal to a combinatorial lower bound known as the \textit{triangle number}, and the pattern given was exactly that of the biadjacency matrix (defined in Section \ref{sec:heawood_msr}) of the Heawood graph.

The minimum rank of the Heawood graph may be obtained as follows.  First, the ordinary zero forcing number (not the positive semidefinite variant) gives a lower bound of $8$.  Meanwhile, the adjacency matrix $A$ of the graph has the eigenvalue $\sqrt 2$ with multiplicity $6$, so that $\rank(A - \sqrt 2 I)=14-6=8$ gives a corresponding upper bound.  Hence, the minimum rank of the Heawood graph is $8$.

In the context of minimum semidefinite rank, interest in the Heawood graph emerged due to properties making it an attractive test case for Conjecture \ref{conj:girth_conjecture}, as outlined in Section \ref{sec:intro}.  For what follows, the most important way to view the Heawood graph is through its connection with the \textit{Fano plane}, the finite projective plane of order $2$,  illustrated in Figure \ref{fig:fano_plane}.  This is a finite geometry comprising seven points and seven lines in which each line contains exactly three points and each point lies on exactly three lines.  Hence, its points and lines give a Steiner triple system (in fact, the unique one) of order $7$.  We return to this connection in Section \ref{sec:generalization}; for now, we need to note only the following.

\begin{figure}[h]
\begin{tikzpicture}

\def \r {0.8}
\def \pt_mark_radius {1.75}

\coordinate[coordinate] (O) at (0,0);
\path[draw, fill] (O) circle[radius=\pt_mark_radius pt];

\draw[thick] (O) circle (\r);

	\foreach \s in {1,2,3}
	{
		\coordinate[coordinate] (A\s) at ({120 * \s - 90} : \r);
		\coordinate[coordinate] (B\s) at ($(O)!{-2*\r cm}!(A\s)$);
		\draw[thick] (A\s) -- (B\s);
		
		\path[draw, fill] (A\s) circle[radius=\pt_mark_radius pt];
		\path[draw, fill] (B\s) circle[radius=\pt_mark_radius pt];
	}
	
	\draw[thick] (B1) -- (B2) -- (B3) -- (B1);

	\draw (B3) node[above=0.5ex]				{$\sf 2$};
	\draw (B2) node[below=1.5ex,right=0.0ex]	{$\sf 4$};
	\draw (B1) node[below=1.5ex,left=0.0ex]		{$\sf 7$};
	\draw (A1) node[above=0.5ex,right=0.25ex]	{$\sf 5$};
	\draw (A3) node[below=0.25ex]				{$\sf 6$};
	\draw (A2) node[above=0.5ex,left=0.25ex]	{$\sf 3$};
	\draw (O) node[above=1.9ex,right=-0.6ex]	{$\sf 1$};
	
\end{tikzpicture}\caption{The Fano plane.}\label{fig:fano_plane}
\end{figure}
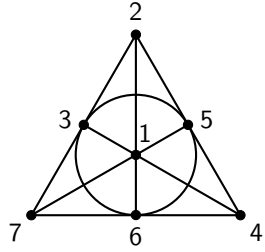

\begin{observation}\label{obs:lines_of_fano_plane}
The points of the Fano plane may be identified with the integers $1,2,\ldots,7$ so that the set of its lines becomes
\begin{equation}\label{eqn:fano_plane_lines}
\{\{1,2,6\}, \{2,3,7\}, \{1,3,4\},\{2,4,5\},\{3,5,6\},\{4,6,7\}, \{1,5,7\}\}.
\end{equation}
Figure \ref{fig:fano_plane} shows the points of the Fano plane labeled to reflect such an identification.
\end{observation}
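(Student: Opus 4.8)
The plan is to treat the statement as a finite verification: I would exhibit the labeling drawn in Figure~\ref{fig:fano_plane}, read off the seven lines it determines, confirm they coincide with the collection in \eqref{eqn:fano_plane_lines}, and then check that this collection genuinely describes the Fano plane rather than an arbitrary family of triples.

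First I would read the seven collinear triples directly from the figure under the labels shown. The drawing consists of three \emph{medians}, each joining an outer vertex through the central point (labeled $1$) to the midpoint of the opposite side; the three \emph{sides} of the outer triangle, each passing through one such midpoint; and a \emph{circle} through all three midpoints. The medians give $\{1,5,7\}$, $\{1,3,4\}$, and $\{1,2,6\}$; the sides give $\{4,6,7\}$, $\{2,4,5\}$, and $\{2,3,7\}$; and the circle gives $\{3,5,6\}$. These are precisely the seven triples listed in \eqref{eqn:fano_plane_lines}.

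Next I would confirm that this collection really is a copy of the Fano plane rather than an arbitrary family of triples. It suffices to verify the defining property of a Steiner triple system of order $7$, namely that each of the $\binom{7}{2} = 21$ unordered pairs from $\{1,\dots,7\}$ lies in exactly one triple. Since each of the seven triples covers $\binom{3}{2} = 3$ pairs and $7 \cdot 3 = 21$, this reduces to checking that no pair is covered twice, which a short inspection of the list settles. By the uniqueness of the Steiner triple system of order $7$, the structure so described is isomorphic to the Fano plane, so the labeling realizes the claimed identification.

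The main obstacle here is purely one of bookkeeping rather than of mathematical depth: one must be careful to count the inscribed circle as a line and to observe that each side of the outer triangle passes through a midpoint, since overlooking either feature would yield too few lines or the wrong triples. Beyond this careful reading of the figure and the one-line pair-count, no further argument is needed.
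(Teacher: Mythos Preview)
Your proposal is correct and aligns with the paper's treatment: the paper offers no proof beyond pointing to Figure~\ref{fig:fano_plane}, so your reading of the seven triples from that figure is exactly the intended justification. Your additional step of verifying the Steiner property and invoking uniqueness of the order-$7$ system is more rigor than the paper supplies, but it is sound and does no harm.
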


The Heawood graph is the \textit{point-edge incidence graph} of the Fano plane.  That is, its vertices can be partitioned into two independent sets, one in correspondence with the points of the Fano plane, and the other in correspondence with its lines, such that a point and a line are incident precisely when the corresponding vertices are adjacent.  Through this connection, many of the properties of the Heawood graph that we will need follow from properties of the Fano plane.

One such property concerns the smallest size of a set from which one may color the points of the Fano plane without inducing a \textit{monochromatic line}, a line with all of its points colored the same.  In particular, the following simple fact (a special case of a result of \cite{rosa}; see Section \ref{sec:generalization}) is straightforward to verify.

\begin{lemma}\label{lem:fano_plane_not_2_colorable}
Every $2$-coloring of the points of the Fano plane induces a monochromatic line.
\end{lemma}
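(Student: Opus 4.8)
The plan is to prove Lemma~\ref{lem:fano_plane_not_2_colorable} by a direct counting argument, using the list of lines given in Observation~\ref{obs:lines_of_fano_plane} together with the highly symmetric structure of the Fano plane. Suppose for contradiction that we have a $2$-coloring of the seven points, say with colors red and blue, that induces no monochromatic line. Let $r$ be the number of red points and $b = 7 - r$ the number of blue points. Since the coloring is symmetric in the two colors, I may assume without loss of generality that $r \le 3$, so $r \in \{0,1,2,3\}$. The cases $r = 0$ and $r = 1$ are immediate: if at most one point is blue, then the six (or seven) red points certainly contain a whole line, since every pair of red points lies on a common line whose third point is also red. So the interesting cases are $r = 2$ and $r = 3$.

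First I would dispose of $r = 2$. If exactly two points are red, then five points are blue, and I claim these five blue points must contain a line. The key fact is that any two points of the Fano plane lie on a unique line, and the remaining point of that line is then determined. With only two red points, there is exactly one line through them; every other line of the Fano plane meets $\{$the two red points$\}$ in at most one point, hence contains at least two blue points, and in fact each of the seven lines has at most two red points. A short check against \eqref{eqn:fano_plane_lines} shows that some line avoids both red points entirely and is therefore all blue. Concretely, the two red points determine a unique line, and since each point lies on three lines, the two red points together lie on at most $3 + 3 - 1 = 5$ lines (subtracting their common line), leaving at least two of the seven lines disjoint from the red set; any such line is monochromatic blue.

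The heart of the argument is the case $r = 3$, where three points are red and four are blue; here I would argue that one of the two color classes must contain a line. The crux is a counting identity. Each of the seven lines, being a $3$-element subset of a $7$-set colored with three red and four blue points, has a color distribution among the types $(3,0), (2,1), (1,2), (0,3)$ of (red,blue) counts. Summing the number of red points over all seven lines counts each red point three times (since each point lies on three lines), giving a total of $3 \cdot 3 = 9$ red incidences; likewise $4 \cdot 3 = 12$ blue incidences. If no line is monochromatic, then every line is of type $(2,1)$ or $(1,2)$, so each line contributes either $1$ or $2$ to the red total. Letting $a$ be the number of type-$(2,1)$ lines and $c$ the number of type-$(1,2)$ lines, I get $a + c = 7$ and $2a + c = 9$, forcing $a = 2$ and $c = 5$. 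I would then show this distribution is impossible: the two type-$(2,1)$ lines each contain two of the three red points, but any two of the three red points determine a unique line, so the three red points lie on exactly three lines containing a red pair (one for each of the $\binom{3}{2} = 3$ pairs), contradicting $a = 2$ unless two of those three pairs lie on a common line, which cannot happen since three collinear points would already form a red line.

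The main obstacle I anticipate is making the final step of the $r = 3$ case airtight rather than hand-wavy: the parity/counting computation cleanly forces the line-type distribution, but ruling out every realization of that distribution requires care about the geometry of which triples can be collinear. The cleanest route is probably to exploit that the three pairs among three non-collinear red points lie on three \emph{distinct} lines (so there are at least three type-$(2,1)$ lines, not two), immediately contradicting $a = 2$; alternatively, one can simply appeal to the complete list \eqref{eqn:fano_plane_lines} and verify by inspection that no $3$-subset and its complementary $4$-subset can simultaneously avoid containing a line. Either way, the lemma will follow once the two nontrivial cases are closed.
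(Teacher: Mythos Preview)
Your counting argument is correct in substance, and in fact goes well beyond what the paper does: the paper gives no proof of Lemma~\ref{lem:fano_plane_not_2_colorable} at all, stating only that it is ``straightforward to verify'' and noting that it is a special case of Rosa's theorem (recorded later as Theorem~\ref{thm:steiner_3_colorable}). So there is no approach to compare against; your direct argument is a perfectly good way to fill in what the paper leaves to the reader.

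Two small things to clean up. First, in the $r\in\{0,1\}$ case you have swapped the colors: since $r$ counts red points, $r\le 1$ means at most one point is \emph{red} and hence at least six are blue, not the other way around. The justification you give there (``every pair of red points lies on a common line whose third point is also red'') is also not literally true even after fixing the color swap, since the third point on a line through two blue points could be the lone red point; the clean argument is simply that the single red point lies on only three of the seven lines, so four lines miss it entirely and are all blue. Second, in the $r=3$ case your final step is exactly right and not hand-wavy: if the three red points are collinear you already have a red line, and otherwise the three pairs among them determine three \emph{distinct} lines each containing exactly two reds, forcing $a\ge 3$ and contradicting $a=2$. That closes the argument; no case-check against \eqref{eqn:fano_plane_lines} is needed.
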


\section{The minimum semidefinite rank of the Heawood graph}\label{sec:heawood_msr}

The goal of this section is to establish that the minimum semidefinite rank of the Heawood graph is 10, and that this in fact holds over $\mathbb R$ as well.  We begin by noting that if a graph is bipartite, then a special attack is possible on the problem of determining its minimum semidefinite rank. A brief argument in the case of the Heawood graph follows; for a general discussion, see \cite[Theorem 5.3]{deaett_msr} or \cite[Proposition 3.1]{jiang_et_al}.

\begin{lemma}\label{lm:condition_on_terminal_Us}
Let $F$ be $\mathbb R$ or $\mathbb C$.  The Heawood graph has an orthogonal representation in $F^{7+n}$ if and only if some matrix $A \in M_{7+n,7}(F)$ with mutually orthogonal columns has the form 
\begin{equation}\label{eq:heawood_matrix_with_terminal_u_vectors}
{\left[ \begin{array}{lllllllll}
  * & * & 0 & 0 & 0 & * & 0 \\
  0 & * & * & 0 & 0 & 0 & * \\
  * & 0 & * & * & 0 & 0 & 0 \\
  0 & * & 0 & * & * & 0 & 0 \\
  0 & 0 & * &  0& * & * & 0 \\
  0 & 0 & 0 & * & 0 & * & * \\
  * & 0 & 0 & 0 & * & 0 & * \\ \hline
  u_1 & u_2 & u_3 & u_4 & u_5 & u_6 & u_7
\end{array} \right]},
\end{equation}
where each $\ast$ denotes a nonzero entry, and each $u_i$ is a (column) vector in $F^n$.
\end{lemma}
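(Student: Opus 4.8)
The plan is to read off the claim directly from the bipartite structure of the Heawood graph $H$. Write its two parts as $P=\{p_1,\dots,p_7\}$ (the points) and $L=\{\ell_1,\dots,\ell_7\}$ (the lines), so that each part is an independent set and every edge joins a point to a line, with $p_i\sim\ell_j$ exactly when point $i$ lies on line $j$. Up to a relabeling of the vertices, the $7\times 7$ matrix recording these incidences is precisely the pattern shown in the top block of \eqref{eq:heawood_matrix_with_terminal_u_vectors} (one checks that its columns are the seven blocks of a Steiner triple system of order $7$). Consequently, an orthogonal representation $r$ of $H$ in $F^{7+n}$ is exactly an assignment of vectors for which the seven vectors $r(p_i)$ are pairwise orthogonal, the seven vectors $r(\ell_j)$ are pairwise orthogonal, and $\langle r(p_i),r(\ell_j)\rangle\neq 0$ if and only if point $i$ lies on line $j$.

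First I would normalize the representation. Since every vertex of $H$ has a neighbor, no vertex can be sent to the zero vector, so $r(p_1),\dots,r(p_7)$ are nonzero; being pairwise orthogonal they are linearly independent and span a $7$-dimensional subspace $U\subseteq F^{7+n}$. Two operations preserve the property of being an orthogonal representation: applying a unitary (respectively, real orthogonal) transformation of the ambient space, which preserves every inner product and hence every relation of (non)orthogonality; and multiplying any single vector by a nonzero scalar, which preserves the zero-nonzero pattern of its inner products with all the other vectors. Building a unitary from an orthonormal basis of $U$ extended to the whole space carries each $r(p_i)$ to a scalar multiple of the standard basis vector $e_i$; rescaling then lets me assume without loss of generality that $r(p_i)=e_i$ for $i=1,\dots,7$, where $e_i$ is the $i$-th standard basis vector of $F^{7+n}$.

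With this normalization the matrix $A\in M_{7+n,7}(F)$ whose $j$-th column is $r(\ell_j)$ is the desired object. Its columns are mutually orthogonal precisely because the line-vectors are pairwise orthogonal. The entry of $A$ in row $i$ and column $j$ is the $i$-th coordinate of $r(\ell_j)$, which is nonzero exactly when $\langle e_i,r(\ell_j)\rangle=\langle r(p_i),r(\ell_j)\rangle\neq 0$, i.e. when point $i$ lies on line $j$; this forces the top $7\times 7$ block of $A$ to display exactly the stated pattern, while the bottom $n$ coordinates of each $r(\ell_j)$ form an unconstrained vector $u_j\in F^n$. Conversely, given any $A$ of the stated form with mutually orthogonal columns, setting $r(p_i)=e_i$ and letting $r(\ell_j)$ be the $j$-th column of $A$ recovers an orthogonal representation: the $e_i$ are pairwise orthogonal, the columns are pairwise orthogonal by hypothesis, and the cross inner products reproduce the incidence relation by the same coordinate computation. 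This yields both directions of the equivalence.

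I expect the only delicate point to be the normalization step, namely the justification that fixing the point-vectors to be $e_1,\dots,e_7$ is genuinely without loss of generality. The content there is exactly the two invariance properties noted above, together with the remark that a faithful representation cannot use the zero vector; once these are in hand the remainder is bookkeeping with the biadjacency pattern. It is worth emphasizing that nothing in this reduction is special to the Fano plane: the identical argument applies verbatim to any bipartite graph, and in particular to the incidence graph of an arbitrary Steiner triple system, which is what makes it the natural starting point for the generalization in Section \ref{sec:generalization}.
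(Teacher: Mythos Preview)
Your proof is correct and follows essentially the same approach as the paper's: both directions hinge on normalizing (via a unitary/orthogonal transformation and rescaling) so that one partite set is represented by the first seven standard basis vectors, after which the other partite set's vectors form the columns of $A$. Your write-up is slightly more explicit than the paper's about why this normalization is legitimate (nonzero vectors, linear independence, invariance under unitary maps and nonzero scalings), but the argument is the same in substance.
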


\begin{proof}
Let $B$ be a \textit{biadjacency matrix} for the Heawood graph; that is, $B$ is a $(0,1)$-matrix with rows in correspondence with one of its partite sets and columns in correspondence with the other such that $B_{ij}=1$precisely when the vertices corresponding to row $i$ and column $j$ are adjacent.  Then, subject to an appropriate ordering of its rows and columns, the zero-nonzero pattern of $B$ is given by the upper $7\times 7$ submatrix of \eqref{eq:heawood_matrix_with_terminal_u_vectors}.  That is, $A \in M_{7+n,7}(F)$ is of the form \eqref{eq:heawood_matrix_with_terminal_u_vectors} if and only if the upper $7\times 7$ submatrix of $A$ has the zero-nonzero pattern of $B$.  Thus, if such a matrix $A$ exists with mutually orthogonal columns, then the columns of $A$ together with the initial $7$ unit coordinate vectors in $F^{7+n}$ form an orthogonal representation of the Heawood graph.

Conversely, given an orthogonal representation of the Heawood graph in $F^{7+n}$, it may be assumed (subject to an appropriate unitary transformation) that one of the partite sets is assigned the first $7$ standard coordinate vectors.
Taking the vectors assigned to the other partite set as the columns of a matrix $A$ then gives $A \in M_{7+n,7}(F)$ of the form \eqref{eq:heawood_matrix_with_terminal_u_vectors} with mutually orthogonal columns.
\end{proof}

Hence, the minimum semidefinite rank of the Heawood graph is seen to be the smallest value of $7+n$ such that some $A \in M_{7+n,7}(\mathbb C)$ of the form \eqref{eq:heawood_matrix_with_terminal_u_vectors} has mutually orthogonal columns.  Lemma \ref{lm:condition_on_vectors} gives a useful reformulation of this condition; its proof relies on the following  observation.

\begin{observation}\label{obs:combinatorics_of_fano_plane_upper_7_by_7}
Let $A$ be a matrix of the form \eqref{eq:heawood_matrix_with_terminal_u_vectors}.  In particular, each of the seven lines of the Fano plane, as given in \eqref{eqn:fano_plane_lines}, gives the locations of the nonzero entries within one of the first seven rows of $A$.
Since each pair of points of the Fano plane lies on exactly one line, it follows that,
for every pair of distinct columns $i$ and $j$ of $A$, there exists a unique $k \in \{1,2,\ldots,7\}$ such that both columns have a nonzero entry in row $k$.  Hence, when $A$ has entries from $\mathbb C$, the two columns are orthogonal if and only if
$A_{ki}\overline{A_{kj}} = -\langle u_i, u_j \rangle$.
\end{observation}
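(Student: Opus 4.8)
The plan is to verify the three claims in turn; the first two are purely combinatorial and the third is a one-line computation. First I would confirm that the lines of the Fano plane correspond to the first seven rows of $A$. Reading off the support of the upper $7\times 7$ block of \eqref{eq:heawood_matrix_with_terminal_u_vectors} row by row and comparing with the list \eqref{eqn:fano_plane_lines}, one sees that the nonzero entries of row $k$ occupy exactly the columns indexed by the points on the $k$-th line. Thus the columns of $A$ are indexed by the points of the Fano plane, and the support of each of the first seven rows is one of its lines.

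The second claim is then immediate from the defining property of the Fano plane: any two distinct points lie on exactly one common line. Hence, for distinct columns $i$ and $j$, there is exactly one line, and so exactly one row $k \in \{1,\ldots,7\}$, whose support contains both indices. In other words, among the first seven rows, row $k$ is the unique row in which both columns $i$ and $j$ are nonzero.

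For the third claim I would expand the standard Hermitian inner product of the $i$-th and $j$-th columns $a_i, a_j$ of $A$ as a sum over all $7+n$ coordinates,
\begin{equation*}
\langle a_i, a_j \rangle = \sum_{\ell=1}^{7} A_{\ell i}\overline{A_{\ell j}} + \langle u_i, u_j \rangle .
\end{equation*}
By the previous paragraph every term of the first sum vanishes except the one indexed by the distinguished row $k$, so that sum collapses to $A_{ki}\overline{A_{kj}}$, while the remaining coordinates contribute precisely $\langle u_i, u_j \rangle$. Therefore $\langle a_i, a_j \rangle = A_{ki}\overline{A_{kj}} + \langle u_i, u_j \rangle$, and the two columns are orthogonal precisely when this vanishes, i.e., when $A_{ki}\overline{A_{kj}} = -\langle u_i, u_j \rangle$.

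As every step amounts to bookkeeping, there is no real obstacle here; the only delicate point is fixing the inner-product convention so that the single surviving top-block term appears as $A_{ki}\overline{A_{kj}}$ rather than its conjugate, matching the sign in the final identity.
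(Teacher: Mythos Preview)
Your proof is correct and matches the paper's reasoning exactly; the paper treats this as a self-evident observation with the justification embedded in the statement itself, and you have simply made explicit the bookkeeping that the paper leaves to the reader.
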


Note that, combinatorially, Observation \ref{obs:combinatorics_of_fano_plane_upper_7_by_7} derives from the fact that the Heawood graph is the incidence graph of the Fano plane, precisely because the lines of the Fano plane form a Steiner triple system.

\begin{lemma}\label{lm:condition_on_vectors}
Suppose $u_1,u_2,\ldots, u_7 \in \mathbb C^n$.  Then the following are equivalent.
\begin{enumerate}

	\item
	\label{cond:terminal_vector_condition}
	The vectors $u_1,u_2,\ldots,u_7$ occur as the $u_i$ of \eqref{eq:heawood_matrix_with_terminal_u_vectors} in some matrix $A \in M_{7+n,7}(\mathbb C)$ of that form having mutually orthogonal columns.

	\item \label{cond:inner_product_condition}
	The product $\langle u_i, u_j \rangle\langle u_j, u_k \rangle\langle u_k, u_i\rangle$ is real and negative
whenever $\{i,j,k\}$ is a line in the Fano plane, i.e., whenever $\{i,j,k\}$ is contained in the set \eqref{eqn:fano_plane_lines}.

\end{enumerate}
Moreover, if condition \eqref{cond:inner_product_condition} is satisfied with each $u_i \in \mathbb R^n$, then a matrix $A$ witnessing condition \eqref{cond:terminal_vector_condition} exists with $A \in M_{7+n,7}(\mathbb R)$.

\end{lemma}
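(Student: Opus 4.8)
The plan is to use Observation \ref{obs:combinatorics_of_fano_plane_upper_7_by_7} to translate the requirement that $A$ have mutually orthogonal columns into a system of scalar equations in the unknown entries $A_{ki}$ of the upper $7 \times 7$ block, and then to determine exactly when that system is solvable. Since every pair of points of the Fano plane lies on a unique line, each pair of distinct columns of $A$ shares exactly one common nonzero row among the first seven, and the orthogonality of those two columns is governed by the single equation $A_{ki}\overline{A_{kj}} = -\langle u_i, u_j\rangle$, where $k$ is the line through $i$ and $j$. The key structural point is that these equations \emph{decouple across the lines}: for a fixed line $\ell = \{i,j,k\}$, the three pairs drawn from $\{i,j,k\}$ all share row $\ell$, so the three corresponding equations involve only the three entries $a := A_{\ell i}$, $b := A_{\ell j}$, $c := A_{\ell k}$ of that row, namely
\begin{equation*}
a\overline{b} = -\langle u_i, u_j\rangle, \qquad b\overline{c} = -\langle u_j, u_k\rangle, \qquad c\overline{a} = -\langle u_k, u_i\rangle.
\end{equation*}
Because the $\binom{7}{2} = 21$ pairs of columns are partitioned into the seven lines, three pairs each, it suffices to solve these three-variable systems one line at a time, with $a,b,c$ required to be nonzero.

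For the equivalence, I would take the product of the three equations for a fixed line. The left-hand side becomes $|a|^2|b|^2|c|^2$, while the right-hand side is $-\langle u_i, u_j\rangle\langle u_j, u_k\rangle\langle u_k, u_i\rangle$. If condition \eqref{cond:terminal_vector_condition} holds, then nonzero $a,b,c$ exist, so this quantity equals a positive real number, giving condition \eqref{cond:inner_product_condition}. Conversely, assuming \eqref{cond:inner_product_condition}, the triple product is real and negative, so the right-hand side is positive, which I claim lets me solve each system. Writing $p,q,s$ for the three (necessarily nonzero) right-hand sides, the magnitude equations $|a|\,|b| = |p|$, $|b|\,|c| = |q|$, $|c|\,|a| = |s|$ determine $|a|^2 = |p|\,|s|/|q|$ together with its cyclic analogues, while the phase equations $\arg a - \arg b = \arg p$, $\arg b - \arg c = \arg q$, $\arg c - \arg a = \arg s$ are consistent precisely because their sum, $\arg(pqs)$, vanishes. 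Thus one may freely set $\arg a = 0$, solve for the remaining phases, and obtain nonzero $a,b,c$; assembling these entries over all seven lines produces a matrix $A$ witnessing condition \eqref{cond:terminal_vector_condition}.

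For the final statement, suppose \eqref{cond:inner_product_condition} holds with each $u_i \in \mathbb{R}^n$, so that every inner product, and hence each of $p,q,s$, is real. Since $pqs > 0$, the quantity $ps/q = pqs/q^2$ is positive, so I may take $a = \sqrt{ps/q}$ real and then set $b = p/a$ and $c = qa/p$, which are real and satisfy all three equations (the third because $ca = qa^2/p = s$). Carrying out this real solution on each line assembles a matrix $A \in M_{7+n,7}(\mathbb{R})$ of the required form.

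The computations here are short; the one step that requires care, and which I regard as the crux, is the verification that the per-line phase equations are simultaneously solvable. This is exactly where the hypothesis enters: the reality and negativity of the cyclic product $\langle u_i,u_j\rangle\langle u_j,u_k\rangle\langle u_k,u_i\rangle$ is equivalent to $\arg(pqs) = 0$, which is the precise obstruction to closing up the cycle of phase constraints around a single line. Because the lines interact only through the shared vectors $u_i$ and not through shared matrix entries, no further global compatibility condition arises.
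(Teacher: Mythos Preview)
Your argument is correct and follows essentially the same approach as the paper: both use Observation \ref{obs:combinatorics_of_fano_plane_upper_7_by_7} to decouple the orthogonality constraints into seven independent three-variable systems, one per line, and then show each system is solvable in nonzero (resp.\ real) entries exactly when the cyclic triple product is real and negative. Your forward direction (multiplying the three equations to obtain $|abc|^2=-\langle u_i,u_j\rangle\langle u_j,u_k\rangle\langle u_k,u_i\rangle$) is a slightly more streamlined variant of the paper's substitution chain, and your polar-form solvability argument in the reverse direction is equivalent to the paper's explicit formulas \eqref{eqn:entries_from_inner_products}.
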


\begin{proof}
Suppose first that condition \eqref{cond:terminal_vector_condition} is satisfied.  Then there exists some $A \in M_{7+n,7}(\mathbb C)$ with mutually orthogonal columns such that
\begin{equation*}
A = {\left[ \begin{array}{lllllllll}
  a & b & 0 & 0 & 0 & c & 0 \\
  0 & * & * & 0 & 0 & 0 & * \\
  * & 0 & * & * & 0 & 0 & 0 \\
  0 & * & 0 & * & * & 0 & 0 \\
  0 & 0 & * &  0& * & * & 0 \\
  0 & 0 & 0 & * & 0 & * & * \\
  * & 0 & 0 & 0 & * & 0 & * \\ \hline
  u_1 & u_2 & u_3 & u_4 & u_5 & u_6 & u_7
\end{array} \right]},
\end{equation*}
where $a$, $b$, $c$ and each $\ast$ entry are nonzero.
Since the columns indexed by the set $\{1,2,6\}$ are mutually orthogonal, it follows from Observation \ref{obs:combinatorics_of_fano_plane_upper_7_by_7} that
\begin{equation}\label{eqn:inner_product_equations}
a = \frac {-\langle u_1,u_2\rangle}{\overline b},
\quad
b = \frac {-\langle u_2, u_6 \rangle} {\overline c}
\quad
\text{ and }
\quad
c = \frac {-\langle u_6, u_1 \rangle} {\overline a}.
\end{equation}
Combining the first and third of these equations yields
\begin{equation}\label{eqn:expression_for_c}
c = \frac {-\langle u_6, u_1 \rangle} {\overline a} = -\langle u_6, u_1 \rangle \frac b {-\langle u_2,u_1\rangle} = \frac {\langle u_6, u_1 \rangle} {\langle u_2,u_1\rangle} b,\end{equation}
and combining this with the second equation of
\eqref{eqn:inner_product_equations} gives
\[ b = \frac {-\langle u_2, u_6 \rangle} {\overline c} = \frac {-\langle u_2, u_6 \rangle}{\overline b} \frac {\langle u_1, u_2 \rangle} {\langle u_1, u_6\rangle}, \]
which implies that 
\begin{equation}\label{eqn:equation_in_terms_of_b}
0 > -|b|^2 = \frac {\langle u_1, u_2 \rangle\langle u_2, u_6 \rangle} {\langle u_1, u_6\rangle}
\frac{\langle u_6, u_1\rangle}{\langle u_6, u_1\rangle} = \frac {\langle u_1, u_2 \rangle\langle u_2, u_6 \rangle\langle u_6, u_1\rangle } {|\langle u_1, u_6\rangle|^2}.
\end{equation}
In particular, then, $\langle u_1, u_2 \rangle\langle u_2, u_6 \rangle\langle u_6, u_1\rangle$ is real and negative. 
This same argument may be applied to every row of $A$, and hence condition \eqref{cond:inner_product_condition} holds.

Conversely, suppose condition \eqref{cond:inner_product_condition} holds.
Equations analogous to \eqref{eqn:inner_product_equations}, \eqref{eqn:expression_for_c} and \eqref{eqn:equation_in_terms_of_b} then yield values for the nonzero entries in each of the initial $7$ rows of a matrix $A$ of the form \eqref{eq:heawood_matrix_with_terminal_u_vectors}.  Explicitly, for each $m \in \{1,2,\ldots,7\}$, if the three nonzero entries in row $m$ fall in columns $i$, $j$ and $k$ with $i < j < k$, then values for those entries may be taken as
\begin{equation}\label{eqn:entries_from_inner_products}
A_{mj} =
 \sqrt{-\frac {\langle u_i, u_j \rangle\langle u_j, u_k \rangle\langle u_k, u_i\rangle } {|\langle u_i, u_k\rangle|^2}},
\quad
A_{mi} = -\frac{\langle u_i,u_j\rangle}{A_{mj}},
\quad \text{and}\quad
A_{mk} = A_{mj}\frac{\langle u_k, u_i \rangle}{\langle u_j,u_i\rangle}.
\end{equation}
It then follows from Observation \ref{obs:combinatorics_of_fano_plane_upper_7_by_7} that $A$ has mutually orthogonal columns. Hence, condition \eqref{cond:terminal_vector_condition} holds.
\end{proof}

Lemmas \ref{lm:condition_on_terminal_Us} and \ref{lm:condition_on_vectors} together show that the minimum semidefinite rank of the Heawood graph is the smallest value of $7+n$ such that vectors $u_1,u_2,\ldots,u_7 \in \mathbb{C}^n$ exist satisfying condition \eqref{cond:inner_product_condition} of Lemma \ref{lm:condition_on_vectors}.  In particular, to establish an upper bound of $10$ on
this value, it suffices to construct vectors in $\mathbb C^3$ satisfying this condition; we next show that in fact such vectors can be constructed in $\mathbb R^3$.

\begin{lemma}\label{lm:heptagonal_construction}
There exist vectors $u_1,u_2,\ldots,u_7 \in \mathbb R^3$ 
such that $\langle u_i, u_j \rangle\langle u_j, u_k \rangle\langle u_k, u_i\rangle$ is real and negative whenever $\{i,j,k\}$ is a line in the Fano plane, i.e., whenever $\{i,j,k\}$ is contained in the set \eqref{eqn:fano_plane_lines}.
\end{lemma}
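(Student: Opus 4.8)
The plan is to exploit the cyclic symmetry hiding in \eqref{eqn:fano_plane_lines}. First I would observe that, with the labeling of Observation \ref{obs:lines_of_fano_plane}, the seven lines form a single orbit under the shift $i \mapsto i+1 \pmod 7$ (taking residues $1,\ldots,7$ with $7\equiv 0$): adding $1$ repeatedly to the base block $\{1,2,6\}$ produces $\{2,3,7\}$, then $\{1,3,4\}$, and so on through all seven lines. This invites building the $u_i$ so that the same cyclic shift acts on them as a rigid rotation of $\mathbb R^3$; then the target product will automatically be the \emph{same} for every line, collapsing seven constraints into one.

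Concretely, I would place the points at equal angular spacing on a horizontal circle at a common height. Set $\omega = 2\pi/7$ and
\[
u_j = (\cos(j\omega),\ \sin(j\omega),\ h), \qquad j = 1,\ldots,7,
\]
for a real parameter $h$ to be chosen. A one-line computation gives $\langle u_i,u_j\rangle = \cos\!\big((i-j)\omega\big) + h^2$, which depends only on the cyclic difference $i-j \pmod 7$, and the shift $i\mapsto i+1$ is realized by rotation about the vertical axis.

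The combinatorial heart of the argument — and the step I expect to be the main obstacle — is verifying that for every line the three pairwise differences cover the three distance classes $\{1,6\}$, $\{2,5\}$, $\{3,4\}$ exactly once; this is the Steiner property expressed cyclically. By the symmetry just noted it suffices to check the base block $\{1,2,6\}$, whose pairwise differences are $\pm 1$, $\pm 4 \equiv \mp 3$, and $\pm 5 \equiv \mp 2$. Since cosine is even and $\cos(4\omega)=\cos(3\omega)$, $\cos(5\omega)=\cos(2\omega)$, the three inner products along every line are exactly $\cos\omega + h^2$, $\cos 2\omega + h^2$, and $\cos 3\omega + h^2$. Each product of inner products is then real (being a product of reals), and equals
\[
P(h) = (\cos\omega + h^2)(\cos 2\omega + h^2)(\cos 3\omega + h^2)
\]
for \emph{every} line; only its sign remains to be controlled.

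Finally I would choose $h$. Because $\cos\omega > 0 > \cos 2\omega > \cos 3\omega$, viewing $P$ as a cubic in $t = h^2 \ge 0$ shows its roots are $-\cos\omega < 0 < -\cos 2\omega < -\cos 3\omega$. For any $t$ strictly between the two positive roots, the factors $(t+\cos\omega)$ and $(t+\cos 2\omega)$ are positive while $(t+\cos 3\omega)$ is negative, so $P(h) < 0$. This interval is nonempty and consists of positive numbers — for instance $h^2 = \tfrac12$ lies in it, since $-\cos(4\pi/7) < \tfrac12 < -\cos(6\pi/7)$ — which yields the desired vectors and completes the construction. Once the symmetry reduction is in hand, this last sign analysis is entirely routine.
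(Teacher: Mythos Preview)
Your construction is correct and is essentially the same as the paper's: the paper likewise places $u_j = (\cos(2\pi j/7),\sin(2\pi j/7),\sqrt{\alpha})$, observes that $\langle u_j,u_k\rangle = \cos(2\pi(j-k)/7)+\alpha$ depends only on the cyclic difference, reduces to the single block $\{1,2,6\}$, and then chooses $\alpha$ in the interval $(\cos(3\pi/7),\cos(\pi/7)) = (-\cos 2\omega,-\cos 3\omega)$, which is exactly your interval for $h^2$. Your framing of the sign analysis as a cubic in $t=h^2$ and the explicit choice $h^2=\tfrac12$ are pleasant touches but not substantively different.
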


\begin{proof}
Given a positive real number $\alpha$, let
\[ u_j = \left(\cos(2\pi j/7),\sin(2\pi j/7), \sqrt\alpha\, \right) \]
for each $j\in\{1,2,\ldots,7\}$.
Then, for any $j$ and $k$, 
\begin{eqnarray*}
\langle u_j,u_k \rangle &=& \cos(2\pi j/7)\cos(2\pi k/7) + \sin(2\pi j/7)\sin(2\pi k/7) + \alpha \\
&=& \cos( 2\pi(j-k)/7) + \alpha,
\end{eqnarray*}
so that $\langle u_j,u_k\rangle$ is completely determined by the difference $j-k$ modulo $7$.
But the set of pairwise differences modulo $7$ is the same for every set contained in \eqref{eqn:fano_plane_lines}; explicitly, it is $\{1,4,5\}$.  Hence, it suffices to ensure that the conclusion holds for any one such set, e.g., to guarantee that
$\langle u_1, u_2 \rangle\langle u_2, u_6 \rangle\langle u_6, u_1\rangle$
is real and negative.  This can be achieved by choosing $\alpha$ such that $\cos(3\pi/7) < \alpha  < \cos(\pi/7)$, as then
\begin{alignat*}{7}
\langle u_1,u_2 \rangle &=& \cos(2\pi/7) + \alpha &>& 0,\phantom{\cos(2\pi/7) + \alpha} \\
\langle u_2,u_6 \rangle &=& \cos(8\pi/7) + \alpha &=& -\cos(\pi/7) + \alpha & < 0, & \text{ and}\\
\langle u_6,u_1 \rangle &=& ~\cos(10\pi/7)+ \alpha &=& ~-\cos(3\pi/7)+\alpha  & > 0. &
 \qedhere
\end{alignat*}
\end{proof}

This yields an upper bound on the minimum semidefinite rank of the Heawood graph.

\begin{proposition}\label{prop:real_upper_bound}
The minimum semidefinite rank of the Heawood graph is at most $10$.
\end{proposition}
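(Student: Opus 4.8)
The plan is simply to assemble the three preceding lemmas, since all of the substantive work has already been done. Lemma~\ref{lm:heptagonal_construction} supplies explicit vectors $u_1,\ldots,u_7 \in \mathbb{R}^3$ for which the triple product $\langle u_i,u_j\rangle\langle u_j,u_k\rangle\langle u_k,u_i\rangle$ is real and negative along every line $\{i,j,k\}$ of the Fano plane. This is precisely condition~\eqref{cond:inner_product_condition} of Lemma~\ref{lm:condition_on_vectors}, specialized to $n=3$. Because the $u_i$ are real, I would then invoke the final clause of Lemma~\ref{lm:condition_on_vectors}, which guarantees a \emph{real} matrix $A \in M_{10,7}(\mathbb{R})$ of the form \eqref{eq:heawood_matrix_with_terminal_u_vectors} (with these $u_i$ as its terminal blocks) whose columns are mutually orthogonal.

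With such an $A$ in hand, the next step is to apply Lemma~\ref{lm:condition_on_terminal_Us} with $F = \mathbb{R}$ and $n = 3$: the existence of a matrix of this form with mutually orthogonal columns is exactly the condition for the Heawood graph to admit an orthogonal representation in $\mathbb{R}^{7+n} = \mathbb{R}^{10}$. Viewing this real representation inside $\mathbb{C}^{10}$ (the inner products are unchanged), the Observation identifying the least dimension of a complex orthogonal representation of $G$ with $\msr{G}$ yields $\msr{H} \le 10$, which is the claim.

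There is no genuine obstacle remaining at this stage; the essential difficulty was already overcome in the heptagonal construction of Lemma~\ref{lm:heptagonal_construction} and the reduction carried out in Lemmas~\ref{lm:condition_on_terminal_Us} and~\ref{lm:condition_on_vectors}. The only point meriting a moment of care is the dimension bookkeeping: the representation resides in $F^{7+n}$ with $n=3$, so the seven standard coordinate vectors assigned to one partite set, together with the seven columns of $A$, realize the graph in exactly $7 + 3 = 10$ dimensions. It is worth emphasizing that because the witnessing vectors and matrix are real, the bound is in fact attained over $\mathbb{R}$, so the same argument also shows the minimum semidefinite rank of the Heawood graph over $\mathbb{R}$ to be at most $10$.
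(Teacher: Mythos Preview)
Your proposal is correct and follows essentially the same route as the paper's own proof: invoke Lemma~\ref{lm:heptagonal_construction} to obtain real vectors in $\mathbb{R}^3$ satisfying condition~\eqref{cond:inner_product_condition} of Lemma~\ref{lm:condition_on_vectors}, use the real clause of that lemma to produce $A \in M_{10,7}(\mathbb{R})$ with orthogonal columns, and then apply Lemma~\ref{lm:condition_on_terminal_Us} to conclude that the Heawood graph has an orthogonal representation in $\mathbb{R}^{10}$, hence in $\mathbb{C}^{10}$. Your additional remarks on dimension bookkeeping and the real bound are accurate and match the paper's emphasis.
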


\begin{proof}
By Lemma \ref{lm:heptagonal_construction}, there exist $u_1,u_2,\ldots,u_7 \in \mathbb R^3$ satisfying condition \eqref{cond:inner_product_condition} of Lemma \ref{lm:condition_on_vectors}, and hence appearing as the $u_i$ of \eqref{eq:heawood_matrix_with_terminal_u_vectors} for some matrix $A \in M_{10,7}(\mathbb R)$ with orthogonal columns.  Hence, by Lemma \ref{lm:condition_on_terminal_Us}, the Heawood graph has an orthogonal representation in $\mathbb R^{10}$, and hence in $\mathbb{C}^{10}$.
\end{proof}

To establish our main result, we turn now to the requisite lower bound, namely that the minimum semidefinite rank of the Heawood graph is at least $10$.  By the discussion above, it suffices to show that no vectors from $\mathbb C^2$ exist satisfying the conditions of Lemma \ref{lm:condition_on_vectors}.  Our approach can be summarized as follows.  Assuming to the contrary that such vectors do exist, we identify them with points on the Riemann sphere.  We then argue that the two conditions on these vectors shown to be equivalent by Lemma \ref{lm:condition_on_vectors} are themselves equivalent to a condition on the corresponding points on the sphere that, when satisfied, implies that these points must be arranged in such a way as to induce a $2$-coloring of the Fano plane with no monochromatic line, in contradiction to Lemma \ref{lem:fano_plane_not_2_colorable}.

Our first task is to establish the appropriate correspondence between vectors in $\mathbb C^2$ and points on the appropriate sphere in $\mathbb R^3$.  We begin with a crucial observation.

\begin{observation}\label{obs:scaling_invariance}
Each of the two conditions of Lemma \ref{lm:condition_on_vectors} is unaffected by multiplying any individual vector by an arbitrary nonzero complex scalar.
\end{observation}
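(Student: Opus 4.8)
The plan is to prove the invariance for the inner-product condition \eqref{cond:inner_product_condition} only, and then invoke Lemma \ref{lm:condition_on_vectors}: since that lemma shows conditions \eqref{cond:terminal_vector_condition} and \eqref{cond:inner_product_condition} hold or fail together for any fixed tuple $u_1,\dots,u_7$, establishing that one of them is unaffected by scaling a single vector immediately yields the same for the other. This reduction is worthwhile because it avoids having to reason directly about the matrix $A$ of \eqref{cond:terminal_vector_condition}, replacing that with a short computation on inner products.

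First I would fix an index $\ell$ and a nonzero scalar $\lambda \in \mathbb C$ and replace $u_\ell$ by $\lambda u_\ell$, leaving the remaining vectors unchanged. I would then track the effect on each triple product $\langle u_i, u_j \rangle \langle u_j, u_k \rangle \langle u_k, u_i \rangle$ as $\{i,j,k\}$ ranges over the Fano lines. For a line not containing $\ell$, the product is literally unchanged. For a line containing $\ell$, the key observation is that in this cyclic product each of the three vectors appears exactly once as the first argument of an inner product and exactly once as the second argument. With the convention fixed by Observation \ref{obs:combinatorics_of_fano_plane_upper_7_by_7} (linear in the first argument, conjugate-linear in the second), scaling $u_\ell$ by $\lambda$ therefore contributes a factor $\lambda$ from the slot where $u_\ell$ appears first and a factor $\overline\lambda$ from the slot where it appears second, so the entire product is multiplied by $\lambda\overline\lambda = |\lambda|^2$.

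Since $|\lambda|^2$ is a positive real number, multiplication by it preserves the property of being real and negative, and hence condition \eqref{cond:inner_product_condition} survives the scaling. The only point demanding care is the bookkeeping of which argument slot each vector occupies: the reason the condition is scale-invariant is precisely that the cyclic symmetry of the triple product forces the scaling factor to be the positive real $|\lambda|^2$ rather than some quantity that could alter the argument or sign. (Indeed this conclusion is robust even if one swaps the sesquilinearity convention, since each vector still occupies one linear and one conjugate-linear slot.) Once this symmetry is made explicit the step is routine, so I do not anticipate a genuine obstacle here.
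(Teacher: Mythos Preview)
Your argument is correct. The paper states this observation without proof, treating it as immediate, so there is no ``paper's own proof'' to compare against. Your computation for condition~\eqref{cond:inner_product_condition} is exactly the right one: each $u_\ell$ appears once in a linear slot and once in a conjugate-linear slot of the cyclic triple product, so scaling by $\lambda$ multiplies the product by $|\lambda|^2>0$.

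One small remark: the reduction via Lemma~\ref{lm:condition_on_vectors} is valid but not really needed, since condition~\eqref{cond:terminal_vector_condition} also admits a one-line direct check. If $A$ is a matrix of the form~\eqref{eq:heawood_matrix_with_terminal_u_vectors} with mutually orthogonal columns, then multiplying the entire $\ell$th column of $A$ by $\lambda\neq 0$ yields another matrix of the same form (the $\ast$ entries remain nonzero) whose columns are still mutually orthogonal, and in which $u_\ell$ has been replaced by $\lambda u_\ell$. So both conditions are transparently scale-invariant on their own, which is presumably why the paper records this as an observation rather than a lemma.
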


In light of Observation \ref{obs:scaling_invariance}, we may regard the conditions of Lemma \ref{lm:condition_on_vectors} as applying to points on the projective line $\mathbb CP^1$, which can be thought of as the extended complex plane, $\mathbb C \cup \{\infty\}$.  Through the usual stereographic projection, the extended complex plane can be transformed bijectively to a sphere in $\mathbb R^3$.  The image of such an identification is typically referred to as the \textit{Riemann sphere}. (See \cite[Section 3.IV]{needham} for details.)  Any sphere in $\mathbb R^3$ can be made the image of such an identification; for the sake of making our computations explicit in what follows, we will choose the sphere of radius $1/2$ centered at $(0,0,1/2)$, namely
\begin{equation}\label{eq:def_of_sphere}
S = \{ (x,y,z) : x^2+y^2+(z-1/2)^2 = (1/2)^2 \} = \{ (x,y,z) : x^2+y^2+z^2 = z \}.
\end{equation}
Again for the sake of explicit computation, we now define a function $\varphi$ that effects the identification outlined above, mapping points in $\mathbb C^2$ to points on $S$.

\begin{definition}\label{def:def_of_phi_map}
Let $\varphi:\mathbb C^2 \rightarrow \mathbb R^3$ be defined as follows.
First, let $\varphi_1$ map $\mathbb C^2$ to $\mathbb CP^1$ in the usual way, i.e.,
\[ \varphi_1(z_1,z_2) = \begin{cases} z_1/z_2 & \text{if } z_2\not= 0, \\ \infty & \text{otherwise}. \end{cases} \]
Next, apply the familiar stereographic projection to map the image of $\varphi_1$ to the sphere $S$ defined in \eqref{eq:def_of_sphere}.  Specifically, identify $\infty$ with the ``pole'' of the sphere at $(0,0,1)$, and identify $a+bi \in \mathbb C$ with the unique point at which $S\setminus\{(0,0,1)\}$ intersects the line parameterized by
\[ t(a,b,0) + (1-t)(0,0,1),\quad t \in \mathbb R. \]
It follows from \eqref{eq:def_of_sphere} that this point of intersection is $\frac 1{1+a^2+b^2}(a,b,a^2+b^2)$.  Thus, we let
\[ \varphi_2(z) = \begin{cases}
	\frac 1{1+a^2+b^2}(a,b,a^2+b^2) & \text{if } z=a+bi, \text{ and} \\
	(0,0,1) & \text{if } z = \infty.
\end{cases} \]
Finally, let $\varphi = \varphi_2 \circ \varphi_1$.
\end{definition}

Having identified each vector in $\mathbb C^2$ with a point on the sphere $S$,
 the conditions of Lemma \ref{lm:condition_on_vectors} applied to triples of vectors in $\mathbb C^2$ can be reinterpreted as applying to triples of points on $S$.  The next lemma provides two equivalent such interpretations.

\begin{lemma}\label{lm:condition_on_points_of_S2}
Let $C$ denote the center of the sphere $S$ defined in \eqref{eq:def_of_sphere}, i.e., $C=(0,0,1/2)$.  For any $u_i$, $u_j, u_k \in \mathbb C^2$, the following are equivalent.
\begin{enumerate}
	\item\label{cond:ip_cond} The product $\langle u_i,u_j \rangle \langle u_j,u_k \rangle \langle u_k,u_i \rangle$ is real and negative.
	\item\label{cond:center_in_convex_hull} No two of $\varphi(u_i)$, $\varphi(u_j)$ and $\varphi(u_k)$ are antipodal on $S$, but the convex hull of all three contains $C$.
	\item\label{cond:separating_plane_condition} Every plane passing through $C$ that contains none of $\varphi(u_i)$, $\varphi(u_j)$ and $\varphi(u_k)$ separates one of those latter three points from the other two.  Moreover, each of those three points is separated from the other two by some such plane.
\end{enumerate}
\end{lemma}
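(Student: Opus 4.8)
The plan is to prove the three conditions equivalent by isolating the purely convex-geometric equivalence \eqref{cond:center_in_convex_hull}$\iff$\eqref{cond:separating_plane_condition} from the analytic equivalence \eqref{cond:ip_cond}$\iff$\eqref{cond:center_in_convex_hull}, the latter being where the map $\varphi$ does its work. Throughout I would invoke the scaling invariance of both the inner-product condition (Observation \ref{obs:scaling_invariance}) and of $\varphi$ to assume each $u_a$ has unit norm, and I would encode each sphere point by its \emph{Bloch vector} $\vec n_a := 2(\varphi(u_a)-C)$, a genuine unit vector of $\mathbb R^3$ with $\varphi(u_a)=C+\tfrac12\vec n_a$. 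Using Bloch vectors rather than the projective coordinate handles the pole $z=\infty$ uniformly, avoiding any case split on $z_2=0$.

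The analytic heart is a single identity: for unit vectors,
\[
\langle u_i,u_j\rangle\langle u_j,u_k\rangle\langle u_k,u_i\rangle
= \tfrac14\!\left(1 + \vec n_i\cdot\vec n_j + \vec n_j\cdot\vec n_k + \vec n_k\cdot\vec n_i + i\,\vec n_i\cdot(\vec n_j\times\vec n_k)\right).
\]
I would establish this either by direct coordinate computation from the explicit form of $\varphi$ in Definition \ref{def:def_of_phi_map}, or more cleanly by writing each $\langle u_a,u_b\rangle$ through the rank-one projection $u_a u_a^{\ast}=\tfrac12(I+\vec n_a\cdot\vec\sigma)$ and taking a trace. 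From this identity, condition \eqref{cond:ip_cond} splits into two requirements. The imaginary part vanishes exactly when the scalar triple product $\vec n_i\cdot(\vec n_j\times\vec n_k)$ is zero, i.e.\ when $\vec n_i,\vec n_j,\vec n_k$ are coplanar through the origin, equivalently when $\varphi(u_i),\varphi(u_j),\varphi(u_k)$ lie on a common great circle of $S$; and the real part must be negative.

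To analyze the sign in the coplanar case, I would place the three points at angles on that great circle with consecutive arc-lengths $p,q,r>0$ summing to $2\pi$, so that each pairwise dot product is a cosine of an angle difference, and apply the identity $1+\cos\alpha+\cos\beta+\cos\gamma=4\cos\tfrac\alpha2\cos\tfrac\beta2\cos\tfrac\gamma2$ (valid when $\alpha+\beta+\gamma=0$, taking $\gamma=-(p+q)$). Using $p+q+r=2\pi$ this rewrites the real part as $-\cos\tfrac p2\cos\tfrac q2\cos\tfrac r2$. Since $p,q,r\in(0,2\pi)$ sum to $2\pi$, at most one can exceed $\pi$, so this product is positive exactly when all three arcs are below $\pi$; hence the real part is negative precisely when $C$ lies strictly interior to the chordal triangle, that is, when no two points are antipodal and $C$ is in their convex hull. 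Combined with the coplanarity already forced, this is exactly \eqref{cond:center_in_convex_hull}, establishing \eqref{cond:ip_cond}$\iff$\eqref{cond:center_in_convex_hull}. The degenerate cases check out: if some arc equals $\pi$ the triple product vanishes, matching an antipodal pair, and if the points are not coplanar with $C$ the imaginary part is nonzero while $C$ cannot lie in their flat convex hull.

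Finally, \eqref{cond:center_in_convex_hull}$\iff$\eqref{cond:separating_plane_condition} is pure convex geometry of planes through $C$. Forward: condition \eqref{cond:center_in_convex_hull} puts the points on a great circle $\Gamma$ with $C$ strictly inside their chordal triangle, so any plane through $C$ missing the points meets $\Gamma$'s plane in a diameter of $\Gamma$; the points cannot all fall in one open semicircle, forcing a two-against-one split, and the strict-interior position lets one rotate the diameter to isolate any single point. Conversely, if $C\notin\mathrm{conv}$ the separating-hyperplane theorem yields a plane through $C$ with all three points strictly on one side (a forbidden three-against-zero split), and if two points are antipodal then every admissible plane through $C$ places them on opposite sides, so the third can never be isolated; thus \eqref{cond:separating_plane_condition} forces both clauses of \eqref{cond:center_in_convex_hull}. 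I expect the main obstacle to be proving the key identity cleanly and then pinning down the sign dichotomy without error—in particular tracking that ``real part negative'' corresponds to $C$ being \emph{strictly interior} (all arcs below $\pi$) rather than merely in the closed hull, and verifying that the antipodal and non-coplanar degeneracies land on the correct side of each equivalence.
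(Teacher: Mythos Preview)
Your argument is correct and takes a genuinely different route from the paper's. The paper first normalizes by a unitary rotation so that $\varphi(u_i)$ sits at the south pole and $\varphi(u_j)$ lies in the $xz$-plane, then works in explicit projective coordinates $u_i=(0,1)$, $u_j=(s,1)$, $u_k=(t,1)$, reducing condition \eqref{cond:ip_cond} to the single inequality $1+st<0$ with $t\in\mathbb R$, and checks by hand that this places $\varphi(u_k)$ so that $C$ falls inside the chordal triangle; the equivalence \eqref{cond:center_in_convex_hull}$\iff$\eqref{cond:separating_plane_condition} is then argued by constructing and perturbing specific planes. Your approach instead stays symmetric in the three points: the Pauli-matrix trace identity cleanly splits the triple inner product into a real part $1+\sum\vec n_a\cdot\vec n_b$ and an imaginary part equal to the scalar triple product, so coplanarity with $C$ and the interior condition drop out simultaneously, and the sign analysis via $-\cos\tfrac p2\cos\tfrac q2\cos\tfrac r2$ with $p+q+r=2\pi$ replaces the paper's coordinate chase. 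What your method buys is uniformity (no case split at the pole, no distinguished point) and a direct link to the Bloch-sphere formalism; what the paper's method buys is that it needs nothing beyond elementary coordinate geometry and is self-contained for a reader who has never seen $\sigma$-matrices. One small point to watch when you write it up: the paper's $\varphi$ differs from the standard Bloch map by a reflection in the $y$-coordinate, so the sign of the imaginary part in your identity may flip---harmless for the equivalence, since you only use that it vanishes, but worth a sentence if you derive the identity via projections rather than by direct computation from Definition~\ref{def:def_of_phi_map}.
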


\begin{proof}
We start with some simplifying assumptions.  First, subject to the appropriate scaling, we may assume that each of $u_i$, $u_j$ and $u_k$ is equal either to $(1,0)$ or to $(z,1)$ for some $z\in\mathbb C$.  (That is, we may work projectively.)  Since it is clear from Definition \ref{def:def_of_phi_map} that the image of a point under $\varphi$ is  determined only by the line through the origin in $\mathbb C^2$ on which the point lies, this cannot affect conditions \eqref{cond:center_in_convex_hull} or \eqref{cond:separating_plane_condition}, while by Observation \ref{obs:scaling_invariance} it does not affect condition \eqref{cond:ip_cond}.

Next, observe that some pair of rotations of the sphere can be applied sequentially to move $\varphi(u_i)$ to the origin and $\varphi(u_j)$ to a point on the $xz$-plane with a nonnegative $x$-coordinate.  This clearly leaves conditions  \eqref{cond:center_in_convex_hull} and \eqref{cond:separating_plane_condition} unaffected.  Moreover, such a rigid motion of the sphere corresponds to a unitary transformation of $\mathbb C^2$
\cite[Section 6.II]{needham}
and hence preserves condition \eqref{cond:ip_cond}.  Hence, we may assume that $\varphi(u_i)=(0,0,0)$, so that, equivalently, $u_i = (0,1)$,
and also that for some nonnegative real number $s$,
\[ \varphi(u_j)={\textstyle \frac 1{1+s^2}}(s,0,s^2), \text{ so that, equivalently, } u_j = (s,1). \]
These assumptions are illustrated in Figure \ref{fig:sphere_with_points}.

\begin{figure}[h]

\tikzset{%
  >=latex,
  inner sep=0pt,%
  outer sep=2pt,%
  mark coordinate/.style={inner sep=0pt,outer sep=0pt,minimum size=3pt,
    fill=black,circle}%
}

\newcommand\pgfmathsinandcos[3]{%
  \pgfmathsetmacro#1{sin(#3)}%
  \pgfmathsetmacro#2{cos(#3)}%
}
\newcommand\LongitudePlane[3][current plane]{%
  \pgfmathsinandcos\sinEl\cosEl{#2} 
  \pgfmathsinandcos\sint\cost{#3} 
  \tikzset{#1/.estyle={cm={\cost,\sint*\sinEl,0,\cosEl,(0,0)}}}
}
\newcommand\LatitudePlane[3][current plane]{%
  \pgfmathsinandcos\sinEl\cosEl{#2} 
  \pgfmathsinandcos\sint\cost{#3} 
  \pgfmathsetmacro\yshift{\cosEl*\sint}
  \tikzset{#1/.estyle={cm={\cost,0,0,\cost*\sinEl,(0,\yshift)}}} %
}
\newcommand\DrawLongitudeCircle[2][1]{
  \LongitudePlane{\angEl}{#2}
  \tikzset{current plane/.prefix style={scale=#1}}
  \pgfmathsetmacro\angVis{atan(sin(#2)*cos(\angEl)/sin(\angEl))} %
  \draw[current plane] (\angVis:1) arc (\angVis:\angVis+180:1);
  \draw[current plane,dashed] (\angVis-180:1) arc (\angVis-180:\angVis:1);
}
\newcommand\DrawLatitudeCircle[2][1]{
  \LatitudePlane{\angEl}{#2}
  \tikzset{current plane/.prefix style={scale=#1}}
  \pgfmathsetmacro\sinVis{sin(#2)/cos(#2)*sin(\angEl)/cos(\angEl)}
  \pgfmathsetmacro\angVis{asin(min(1,max(\sinVis,-1)))}
  \draw[current plane] (\angVis:1) arc (\angVis:-\angVis-180:1);
  \draw[current plane,dashed] (180-\angVis:1) arc (180-\angVis:\angVis:1);
}
{
\begin{tikzpicture}

\def\R{2.2}
\def\Rminusepsilon{2.19}
\def\angEl{40}
\def\angAz{-130}

\pgfmathsetmacro\H{\R*cos(\angEl)}
\tikzset{xyplane/.estyle={cm={cos(\angAz),sin(\angAz)*sin(\angEl),-sin(\angAz),
                              cos(\angAz)*sin(\angEl),(0,-\H)}}}
\LongitudePlane[xzplane]{\angEl}{\angAz}
\LatitudePlane[equator]{\angEl}{0}

\draw (0,0) circle (\R);
\fill[ball color=white] (0,0) circle (\Rminusepsilon);

\coordinate (O) at (0,0);
\coordinate[mark coordinate] (C) at (0,0);
\coordinate[mark coordinate] (S) at (0,-\H);

\path[xzplane] (0.65*\R,1.5) coordinate[mark coordinate] (XE);

\DrawLongitudeCircle[\R]{\angAz}

\draw[xyplane,->] (-1*\R,0) -- (01*\R,0) node[below] {$x$};
\draw[xyplane,->] (0,-1*\R) -- (0,1*\R) node[below] {$y$};
\draw[->] (0,-\H) -- (0,1.15*\R) node[above] {$z$};

\draw[xzplane,dashed] (0,1.5) -- (0.65*\R,1.5);
\draw[xzplane,dashed] (0.65*\R,1.5) -- (0.65*\R,-1*\R);

\path (C) +(1.4ex,0.1ex) node[below] {$C$};
\path (S) +(4.1ex,2.1ex) node[below] {$\varphi(u_i)$};
\path (XE) +(1.25ex,0.5ex) node[above left] {$\varphi(u_j)$};

\end{tikzpicture}
}

\caption{Illustration of assumptions on the placement of the points $\varphi(u_i)$ and $\varphi(u_j)$ on the sphere $S$ in the proof of Lemma \ref{lm:condition_on_points_of_S2}.}\label{fig:sphere_with_points}
\end{figure}

Finally, for any $z \in \mathbb C^2$, let $\overline \varphi(z)$ denote the point on $S$ antipodal to $\varphi(z)$.
In particular,
\begin{equation}\label{eqn:phi_of_uj_expression}
\overline\varphi(u_j)= {\textstyle\frac 1{1+s^2}} \left( -s,0, 1 \right).
\end{equation}
We now begin the proof by showing conditions \eqref{cond:ip_cond} and \eqref{cond:center_in_convex_hull} to be equivalent.  Suppose first that \eqref{cond:ip_cond} holds.  This is incompatible with $u_k=(1,0)$, since $u_i=(0,1)$.  Therefore $u_k=(t,1)$ for some $t\in\mathbb C$.  Hence, $\langle u_i,u_j \rangle \langle u_j,u_k \rangle \langle u_k,u_i \rangle  = 1+st$ is real and negative by \eqref{cond:ip_cond}, and so $t \in \mathbb R$ with $t < 0$ and $s > 0$.
Thus,
\begin{equation}\label{eqn:phi_of_uk_expression}
\varphi(u_k)={\textstyle\frac 1{1+t^2}}(t,0,t^2),
\end{equation}
and so $\varphi(u_k)$ lies on the $xz$-plane with a negative $x$-coordinate.
Moreover,
\[ 1+st < 0 \implies |st| > 1 \implies s^2t^2 > 1 \implies 1+t^2 < t^2(1+s^2) \implies {\textstyle\frac 1{1+s^2}} < {\textstyle\frac {t^2}{1+t^2}}. \]
By \eqref{eqn:phi_of_uj_expression} and \eqref{eqn:phi_of_uk_expression}, this shows that the $z$-coordinate of $\varphi(u_k)$ exceeds that of  $\overline\varphi(u_j)$, so that $C$ is in the convex hull of $\varphi(u_i)$, $\varphi(u_j)$ and $\varphi(u_k)$, and also shows that $\varphi(u_j) \not= \overline\varphi(u_k)$.  Moreover, neither $\varphi(u_j)$ nor $\varphi(u_k)$ may equal $\overline\varphi(u_i)=(0,0,1)$. Hence, the three points $\varphi(u_i)$, $\varphi(u_j)$ and $\varphi(u_k)$ do not contain an antipodal pair.  Thus, condition \eqref{cond:center_in_convex_hull} holds.

Now suppose that \eqref{cond:center_in_convex_hull} holds.  Then $s>0$, as $s=0$ would give $\varphi(u_j)=\varphi(u_i)$, requiring $\varphi(u_k) = \overline\varphi(u_i)$ in order that $C$ lie in the convex hull of the three points.  Further, since $\varphi(u_k) \not= \overline\varphi(u_i)$, we cannot have $u_k=(1,0)$.  Therefore, $u_k=(t,1)$ for some $t \in \mathbb C$.
The fact that $C$ is in the convex hull of $\varphi(u_i)$, $\varphi(u_j)$ and $\varphi(u_k)$ implies that $\varphi(u_k)$ lies on the plane containing $\varphi(u_i)$, $\varphi(u_j)$ and $C$, namely the $xz$-plane.  Thus, we have $t\in \mathbb R$, so that
\[ \varphi(u_k) = {\textstyle\frac 1{1+t^2}}(t,0,t^2).\]  Moreover,
since $\varphi(u_k)$ and $\varphi(u_j)$
must lie on opposite sides of
the $yz$-plane,
we have $t<0$.
Finally, the $z$-coordinate of $\varphi(u_k)$ must exceed that of $\overline\varphi(u_j)$, so that
$\frac{t^2}{1+t^2} > \frac 1{1+s^2}$.
This gives $t^2+t^2s^2 = t^2(1+s^2) > 1+t^2$.  Hence, $s^2t^2 > 1$, so that $|st| > 1$.  Combining this with the fact that $t<0$ and $s > 0$ so that $st < 0$, we have
$\langle u_i,u_j \rangle \langle u_j,u_k \rangle \langle u_k,u_i \rangle=1+st < 0$, so that condition \eqref{cond:ip_cond} holds.

Having shown that conditions \eqref{cond:ip_cond} and \eqref{cond:center_in_convex_hull} are equivalent, we now complete the proof by proving the equivalence of conditions \eqref{cond:center_in_convex_hull} and \eqref{cond:separating_plane_condition}.
Assume first that \eqref{cond:center_in_convex_hull} holds.  Then $C$ is in the convex hull of $\varphi(u_i)$, $\varphi(u_j)$ and $\varphi(u_k)$, so that all three points lie on the $xz$-plane.  Now consider a plane $P$ passing through $C$ that contains none of $\varphi(u_i)$, $\varphi(u_j)$ and $\varphi(u_k)$.  As $P$ may not coincide with the $xz$-plane, its intersection with the $xz$-plane is a line $L$ passing through $C$.  Since $C$ is in the convex hull of $\varphi(u_i)$, $\varphi(u_j)$ and $\varphi(u_k)$, these three points cannot lie all on the same side of $L$.  This implies that $L$, and hence $P$, separates one of the three points from the other two.

It remains to show that each of $\varphi(u_i)$, $\varphi(u_j)$ and $\varphi(u_k)$ is separated from the other two by some plane containing $C$.  By symmetry, it suffices to prove that $\varphi(u_i)$ is separated from $\varphi(u_j)$ and $\varphi(u_k)$ by some such plane.  Toward that end, consider the plane $P$ perpendicular to the $xz$-plane and passing through $\varphi(u_j)$ and $\overline\varphi (u_j)$.  Since $C$ is in the convex hull of the three points, $\varphi(u_k)$ cannot lie on the same side of $P$ as does $\varphi(u_i)$.  Moreover, $\varphi(u_k)$ cannot lie on the plane $P$, as this would imply $\varphi(u_k)=\overline\varphi(u_j)$, which \eqref{cond:center_in_convex_hull} forbids.  Hence, $\varphi(u_i)$ and $\varphi(u_k)$ must lie on opposite sides of $P$.  Since $P$ contains the line through $C$ that is perpendicular to the $xz$-plane, it follows that rotating $P$ about this line by  a sufficiently small angle produces a plane through $C$ separating $\varphi(u_i)$ from $\varphi(u_j)$ and $\varphi(u_k)$, as desired.

Now suppose \eqref{cond:separating_plane_condition} holds. If any two points among $\varphi(u_i)$, $\varphi(u_j)$ and $\varphi(u_k)$ were antipodal, then those two points could not be separated from the third by any plane containing $C$, which would contradict \eqref{cond:separating_plane_condition}.  Hence, $\varphi(u_i)$, $\varphi(u_j)$ and $\varphi(u_k)$ do not contain an antipodal pair, and it remains to show that $C$ is in their convex hull.

First, since $\varphi(u_i)$ and $\varphi(u_j)$ are not antipodal, they lie on the same side of some line in the $xz$-plane passing through $C$.  If $\varphi(u_k)$ were not in the $xz$-plane, then rotating the $xz$-plane about this line by some small angle would produce a plane relative to which all three of $\varphi(u_i)$, $\varphi(u_j)$ and $\varphi(u_k)$ would lie on the same side, contradicting \eqref{cond:separating_plane_condition}.  Hence, $\varphi(u_k)$ lies on the $xz$-plane along with $\varphi(u_i)$, $\varphi(u_j)$ and $C$.

We have by \eqref{cond:separating_plane_condition} that $\varphi(u_i)$ is separated from $\varphi(u_j)$ and $\varphi(u_k)$ by some plane $P$ that contains none of those points but does contain $C$.  As $P$ may not coincide with the $xz$-plane, it intersects the $xz$-plane in some line $L$ passing through $C$.  Let $A$ be the point at which the line through $\varphi(u_i)$ and $\varphi(u_j)$ intersects $L$ and let $B$ be the point at which the line through $\varphi(u_i)$ and $\varphi(u_k)$ intersects $L$.  (See Figure \ref{fig:circle_diagram}.)

\begin{figure}[h]
\tikzset{%
  >=latex, %
  inner sep=0pt,%
  outer sep=2pt,%
  mark coordinate/.style={inner sep=0pt,outer sep=0pt,minimum size=3pt,
    fill=black,circle}%
}

\begin{tikzpicture}
\def\R{1.5}
\def\theta{44}

\node[draw,minimum size=2cm*\R,inner sep=0,outer sep=0,circle] (C) at (1.0*\R,0) {};
\coordinate[mark coordinate] (O) at (20:0*\R);
\draw (O) node[left] {$\varphi(u_i)$};

\coordinate[mark coordinate] (C) at (1*\R,0);
\draw (C) node[below=1ex] {$C$};

\draw[<-] (0,1*\R) node[above] {$x$} -- (0,-1*\R);
\draw[->] (O) -- (2.5*\R,0) node[right] {$z$};

\coordinate[mark coordinate] (phi_uj) at (1.8*\R,0.6*\R);
\draw (phi_uj) node[above=1ex,right=1ex] {$\varphi(u_j)$};

\coordinate[mark coordinate] (phi_uk) at (1.1*\R,-1*\R);
\draw (phi_uk) node[below=1ex,right=0.25ex] {$\varphi(u_k)$};

\path (C) ++(\theta:1.55*\R) coordinate (L_end1);
\path (C) ++(\theta+180:1.65*\R) coordinate (L_end2);
\draw[dashed,->] (C) -- (L_end1) node[left=2ex] {$L\vphantom{^|}$};
\draw[dashed,->] (C) -- (L_end2);

\draw (O) -- (phi_uj);
\draw (O) -- (phi_uk);

\coordinate[mark coordinate] (A) at (intersection of O--phi_uj and C--L_end1);
\draw (A) node[below=1ex] {$A$};

\coordinate[mark coordinate] (B) at (intersection of O--phi_uk and C--L_end1);
\draw (B) node[below=1ex] {$B$};

\end{tikzpicture}

\caption{Representative arrangement of the points $\varphi(u_i)$, $\varphi(u_j)$ and $\varphi(u_k)$ as in the argument that condition \eqref{cond:separating_plane_condition} implies condition \eqref{cond:center_in_convex_hull} in the proof  of Lemma \ref{lm:condition_on_points_of_S2}.  The circle is the intersection of the $xz$-plane with the sphere $S$.}\label{fig:circle_diagram}
\end{figure}
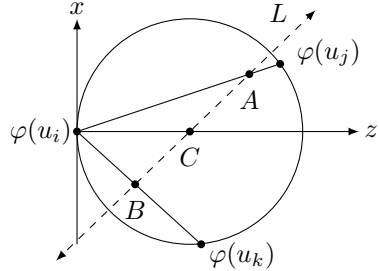

Note that $\varphi(u_j)$ and $\varphi(u_k)$ must lie on opposite sides of the $yz$-plane, as otherwise rotating that plane by some small angle about the line through $C$ that is perpendicular to the $zx$-plane would produce a plane containing $C$ on one side of which would lie all three of the points $\varphi(u_i)$, $\varphi(u_j)$ and $\varphi(u_k)$, contradicting \eqref{cond:separating_plane_condition}.  It follows that $A$ and $B$ lie on opposite sides of the $yz$-plane as well.  This implies that $C$ is on the line segment with endpoints $A$ and $B$.  Since $A$ and $B$ were chosen within the convex hull of $\varphi(u_i)$, $\varphi(u_j)$ and $\varphi(u_k)$, it follows  that $C$ lies in the convex hull of those points as well.  Hence, condition \eqref{cond:center_in_convex_hull} holds.
\end{proof}

We now have that any collection of vectors in $\mathbb C^2$ satisfying the algebraic condition \eqref{cond:ip_cond} of Lemma \ref{lm:condition_on_vectors} corresponds to a collection of points on the sphere $S$ arranged such that every triple of points corresponding to a line in the Fano plane satisfies the geometric conditions \eqref{cond:center_in_convex_hull} and \eqref{cond:separating_plane_condition} of Lemma \ref{lm:condition_on_points_of_S2}.  We next show that such an arrangement gives rise to an impossible coloring of the Fano plane, a contradiction that yields our desired lower bound.

\begin{proposition}\label{prop:complex_lower_bound}
The minimum semidefinite rank of the Heawood graph is at least $10$.
\end{proposition}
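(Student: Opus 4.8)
The plan is to argue by contradiction, translating the assumed algebraic data entirely into the geometry of the sphere $S$ and then reading off a forbidden coloring. As noted in the discussion above, it suffices to show that no vectors $u_1,u_2,\ldots,u_7\in\mathbb C^2$ satisfy condition \eqref{cond:inner_product_condition} of Lemma \ref{lm:condition_on_vectors}. So I would begin by supposing, toward a contradiction, that such vectors exist. Passing to the points $\varphi(u_1),\ldots,\varphi(u_7)$ on $S$ and invoking the equivalence of conditions \eqref{cond:ip_cond} and \eqref{cond:separating_plane_condition} in Lemma \ref{lm:condition_on_points_of_S2}, I obtain that for every line $\{i,j,k\}$ of the Fano plane the triple $\varphi(u_i),\varphi(u_j),\varphi(u_k)$ satisfies condition \eqref{cond:separating_plane_condition}: any plane through the center $C$ that misses all three of those points separates one of them from the other two.

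The crux is that a \emph{single} generic plane through $C$ now induces a $2$-coloring of the Fano plane with no monochromatic line. First I would select one plane $P$ through $C$ that avoids all seven points $\varphi(u_1),\ldots,\varphi(u_7)$ simultaneously. Such a $P$ exists by a dimension count: the planes through $C$ form a two-parameter family (indexed by their normal direction in $\mathbb{RP}^2$), while the planes through $C$ containing any one fixed point of $S$ form only a one-parameter subfamily (those whose normal is perpendicular to the nonzero vector from $C$ to that point). The seven points thus rule out only a one-dimensional subset of the space of planes, so a generic choice of $P$ misses every $\varphi(u_i)$. Using this $P$, I would color each point $i$ of the Fano plane according to which of the two open half-spaces bounded by $P$ contains $\varphi(u_i)$; since $P$ avoids all seven points, this coloring is well-defined.

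To conclude, I would check that no Fano line is monochromatic under this coloring. For any line $\{i,j,k\}$, the plane $P$ contains none of $\varphi(u_i),\varphi(u_j),\varphi(u_k)$, so condition \eqref{cond:separating_plane_condition} forces $P$ to separate one of these three points from the other two. Hence the three points do not all lie on the same side of $P$, so points $i,j,k$ do not all receive the same color. This yields a $2$-coloring of the points of the Fano plane with no monochromatic line, directly contradicting Lemma \ref{lem:fano_plane_not_2_colorable}. The contradiction shows that the assumed vectors cannot exist, which gives the desired lower bound.

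The step I expect to require the most care is the existence of the single plane $P$ missing all seven points at once; this is the one genuinely geometric ingredient, and it must be justified by the dimension-counting argument above rather than merely asserted. Everything else is an immediate consequence of Lemmas \ref{lm:condition_on_points_of_S2} and \ref{lem:fano_plane_not_2_colorable}. Indeed, the separating-plane formulation \eqref{cond:separating_plane_condition} seems tailored precisely so that one plane through $C$ certifies non-monochromaticity of all seven Fano lines at once; the alternative convex-hull formulation \eqref{cond:center_in_convex_hull} would be less convenient here, since it is phrased one triple at a time rather than in terms of a global separating hyperplane.
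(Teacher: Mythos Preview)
Your proposal is correct and follows essentially the same route as the paper's proof: assume vectors in $\mathbb C^2$ exist, pass to points on $S$ via $\varphi$, choose a single plane through $C$ missing all seven points, and read off a $2$-coloring of the Fano plane that contradicts Lemma \ref{lem:fano_plane_not_2_colorable} via condition \eqref{cond:separating_plane_condition}. The only substantive difference is that you supply an explicit dimension-count justification for the existence of the plane $P$, whereas the paper simply asserts that such a plane exists; your added care here is appropriate but does not change the argument.
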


\begin{proof}
Suppose to the contrary that the Heawood graph has an orthogonal representation in $\mathbb C^9$.  Then, by Lemma \ref{lm:condition_on_terminal_Us}, there exist vectors $u_1,u_2,\ldots,u_7 \in \mathbb C^2$ satisfying the equivalent conditions of Lemma \ref{lm:condition_on_vectors}.  By Lemma \ref{lm:condition_on_points_of_S2}, these vectors induce points $\varphi(u_1),\varphi(u_2),\ldots,\varphi(u_7)$ on the sphere $S$ defined in \eqref{eq:def_of_sphere} such that whenever $\{i,j,k\}$ is a line in the Fano plane, i.e., whenever $\{i,j,k\}$ is contained in the set \eqref{eqn:fano_plane_lines}, the triple of points $\varphi(u_i)$, $\varphi(u_j)$ and $\varphi(u_k)$ satisfies condition \eqref{cond:separating_plane_condition} of Lemma \ref{lm:condition_on_points_of_S2}.

Let $P$ be any plane through the center of $S$ that contains none of the points \linebreak $\varphi(u_1),\varphi(u_2),\ldots,\varphi(u_7)$.  Then $P$ divides $S$ into hemispheres.  Choose one of the hemispheres, and color red every point $i$ of the Fano plane such that $\varphi(u_i)$ lies on that hemisphere.  Color the other points of the Fano plane green.  By Corollary \ref{lem:fano_plane_not_2_colorable}, this coloring must result in some line of the Fano plane, say $\{r,s,t\}$, all of whose points are colored the same.  But this means that $\varphi(u_r)$, $\varphi(u_s)$ and $\varphi(u_t)$ lie all on the same hemisphere of $S$, contradicting condition \eqref{cond:separating_plane_condition} of Lemma \ref{lm:condition_on_points_of_S2}.
\end{proof}

Our main result now follows from the combination of Propositions \ref{prop:real_upper_bound} and \ref{prop:complex_lower_bound}.

\begin{theorem}
The minimum semidefinite rank of the Heawood graph is $10$.
\end{theorem}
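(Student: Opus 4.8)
The plan is to obtain the value by sandwiching it between the two bounds already in hand. The substantive work is complete; all that remains is to observe that Propositions \ref{prop:real_upper_bound} and \ref{prop:complex_lower_bound} pin the parameter from above and below at the same value. Writing $H$ for the Heawood graph, Proposition \ref{prop:real_upper_bound} produces an explicit orthogonal representation of $H$ in $\mathbb{R}^{10}$, which is in particular an orthogonal representation in $\mathbb{C}^{10}$, so $\msr{H} \le 10$. Proposition \ref{prop:complex_lower_bound} rules out any orthogonal representation in $\mathbb{C}^9$, giving $\msr{H} \ge 10$. Combining the two yields $\msr{H} = 10$.

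It is worth recalling why the two halves interlock so cleanly across the two fields. The upper bound rests on the heptagonal construction of Lemma \ref{lm:heptagonal_construction}, whose vectors lie in $\mathbb{R}^3$; via Lemmas \ref{lm:condition_on_terminal_Us} and \ref{lm:condition_on_vectors} this yields a real representation and hence the real bound $7+3 = 10$. The lower bound, by contrast, is proved over $\mathbb{C}$, where the projective and spherical machinery of Lemma \ref{lm:condition_on_points_of_S2} is available, and ultimately rests on the non-$2$-colorability of the Fano plane (Lemma \ref{lem:fano_plane_not_2_colorable}). Since any orthogonal representation over $\mathbb{R}$ is also one over $\mathbb{C}$, the complex lower bound of $10$ applies over $\mathbb{R}$ as well; together with the real upper bound of $10$, this shows that the value is $10$ over both $\mathbb{R}$ and $\mathbb{C}$.

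There is essentially no obstacle remaining at this stage: the difficulty of the argument was front-loaded into Proposition \ref{prop:complex_lower_bound}, specifically into establishing that the algebraic inner-product condition corresponds to a hemisphere-separation condition on the Riemann sphere, and into Proposition \ref{prop:real_upper_bound}, where the explicit construction had to be verified. The only point requiring care is the direction of the field inclusion: proving the upper bound over the smaller field $\mathbb{R}$ and the lower bound over the larger field $\mathbb{C}$ is precisely what is needed to conclude equality simultaneously over both.
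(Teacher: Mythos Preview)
Your proof is correct and follows exactly the paper's approach: the theorem is obtained directly by combining Propositions \ref{prop:real_upper_bound} and \ref{prop:complex_lower_bound}. Your additional remarks on the interplay of the real upper bound and complex lower bound are accurate and make explicit what the paper leaves implicit.
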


\section{Incidence graphs of Steiner triple systems}
\label{sec:generalization}

We now identify the Heawood graph as one of a general family of graphs to which the approach of Section \ref{sec:heawood_msr} may be applied.  Recall the following definition from combinatorial design theory; see, e.g., \cite[Chapter 2]{comb_design_handbook}.

\begin{definition}\label{def:steiner_system}
A \textit{Steiner triple system} of order $v$ consists of a set $X$, whose elements are called the \textit{points} of the system, such that $|X|=v$, together with a collection of $3$-subsets of $X$, called the \textit{triples} of the system, such that every $2$-subset of $X$ is contained in exactly one triple.
\end{definition}

It follows from Observation \ref{obs:lines_of_fano_plane} that the lines of the Fano plane form a Steiner triple system of order $7$.  (Actually it is the unique Steiner triple system of that order.)  A fact crucial to the proof of Proposition \ref{prop:complex_lower_bound} was previously noted as Lemma \ref{lem:fano_plane_not_2_colorable}, namely that every $2$-coloring of the points of the Fano plane
induces a monochromatic line.  More generally, the \textit{weak chromatic number} of a Steiner triple system is the smallest number of colors from which the points of the system may be colored such that no triple is left with all of its points colored the same; Lemma \ref{lem:fano_plane_not_2_colorable} is a special case of the following result of \cite{rosa}.

\begin{theorem}[\cite{rosa}]\label{thm:steiner_3_colorable}
	Every Steiner triple system of order $7$ or greater has a weak chromatic number of at least $3$.
\end{theorem}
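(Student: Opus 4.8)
The plan is to prove the statement by contraposition via a double-counting argument: I assume that a Steiner triple system of order $v \ge 7$ admits a $2$-coloring of its points with no monochromatic triple, and derive a numerical contradiction. Color each point red or green, and let $r$ and $g$ be the number of points of each color, so that $r + g = v$. I will use the fact that such a system has exactly $v(v-1)/6$ triples, which follows because each of the $\binom{v}{2}$ pairs of points lies in a unique triple while each triple accounts for exactly three pairs.

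The key quantity to track is the number of \emph{bichromatic} pairs, i.e., pairs consisting of one red and one green point, of which there are exactly $rg$. I would count these in two ways. First, each bichromatic pair lies in a unique triple, directly by the defining property of a Steiner triple system. Second, under the assumption that no triple is monochromatic, every triple must be split $2$--$1$ between the two colors, and such a triple contains exactly two bichromatic pairs. Summing the per-triple count of bichromatic pairs over all triples therefore counts each bichromatic pair exactly once, yielding $rg$ on one side and $2 \cdot v(v-1)/6$ on the other. Equating gives the identity $rg = v(v-1)/3$. To finish, I contrast this with the elementary bound $rg \le v^2/4$, valid whenever $r + g = v$; together these force $v(v-1)/3 \le v^2/4$, equivalently $v \le 4$, contradicting $v \ge 7$. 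Hence no such $2$-coloring exists, and the weak chromatic number is at least $3$.

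I do not expect any computational difficulty in this argument; the two subsidiary verifications (that each bichromatic pair lies in a unique triple, and that a non-monochromatic triple contributes exactly two bichromatic pairs) are both immediate. The genuine content is conceptual rather than technical: the main step is recognizing that counting bichromatic pairs in two ways produces a clean identity, after which the contradiction follows purely from the inequality $rg \le v^2/4$ and the hypothesis $v > 4$. Thus the only ``obstacle'' is identifying the right quantity to double-count, and once bichromatic pairs are chosen, the remainder is routine. As a sanity check, for the Fano plane ($v = 7$) the identity would require $rg = 14$, which is impossible since $r + g = 7$ forces $rg \le 12$, recovering Lemma \ref{lem:fano_plane_not_2_colorable} as the smallest case.
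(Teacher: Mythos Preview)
Your argument is correct. The double-counting of bichromatic pairs yields $rg = v(v-1)/3$, and the AM--GM bound $rg \le v^2/4$ then forces $v \le 4$, contradicting $v \ge 7$. (In fact your argument rules out any $v > 4$, so it applies to every nontrivial Steiner triple system, since the admissible orders are $v \equiv 1,3 \pmod 6$ and the only one with $v \le 4$ is $v=3$.)

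There is, however, nothing to compare against: the paper does not supply its own proof of this theorem. It is quoted as a result of Rosa and used as a black box, both here and in the earlier special case of Lemma~\ref{lem:fano_plane_not_2_colorable}, which the paper likewise calls ``straightforward to verify'' without giving details. Your proposal therefore goes beyond what the paper provides by giving a short self-contained proof; the approach is the standard one for this classical fact.
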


Every Steiner triple system is represented by a bipartite graph in the same sense in which the Fano plane is represented by the Heawood graph.

\begin{definition}
The \textit{incidence graph} of a Steiner triple system is the graph $G$ whose vertices can be partitioned into two sets, one in correspondence with the points of the system and the other in correspondence with its triples, such that two vertices are adjacent precisely when they correspond to a point and a triple containing that point.
\end{definition}

Hence, the Heawood graph is the incidence graph of the unique Steiner triple system of order $7$.  The approach developed in Section \ref{sec:heawood_msr} to establish a lower bound on the minimum semidefinite rank of the Heawood graph can be adapted to do the same for the incidence graph of any Steiner triple system of order at least $7$.

\begin{theorem}\label{thm:general_steiner_triple_system_analog}
Let $G$ be the incidence graph of a Steiner triple system of order $v \ge 7$, let $b$ be the number of triples of the system, and let $n$ be the number of vertices of $G$.  Then $b = { \textstyle \frac 13{v \choose 2} }$, $n= b + v = \textstyle \frac 16 (v^2+5v)$, and
\begin{equation*}\label{eq:steiner_system_general_bound}
\msr{G} \geq\, b + 3 = \textstyle\frac 16 (v^2-v+18).
\end{equation*}
\end{theorem}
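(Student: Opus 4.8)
The plan is to reproduce, essentially verbatim, the chain of reductions developed for the Heawood graph in Section \ref{sec:heawood_msr}, observing that at each step the only features of the Fano plane that were used are precisely the defining features of a Steiner triple system: each triple has exactly three points and each pair of points lies in a unique triple. I would begin with the two counting identities. Since every pair of the $v$ points lies in exactly one triple and each triple accounts for $\binom{3}{2}=3$ such pairs, counting incident pairs two ways gives $3b=\binom{v}{2}$, whence $b=\tfrac13\binom{v}{2}=\tfrac16(v^2-v)$. The vertex count $n=b+v$ is immediate, and substituting yields $n=\tfrac16(v^2+5v)$; likewise $b+3=\tfrac16(v^2-v+18)$. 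These are routine.

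For the bound itself, the key is that the incidence graph $G$ is bipartite with one part indexed by the $b$ triples and the other by the $v$ points, both parts independent. The analog of Lemma \ref{lm:condition_on_terminal_Us} therefore holds: $G$ has an orthogonal representation in $F^{b+n'}$ if and only if some $A\in M_{b+n',v}(F)$ with mutually orthogonal columns has its top $b\times v$ block equal, in zero-nonzero pattern, to a biadjacency matrix of $G$, the bottom block consisting of vectors $u_1,\dots,u_v\in F^{n'}$ indexed by the points. Here one assigns the standard basis to the triple-vertices and reads the point-vertices off as the columns of $A$. Next, because each pair of points lies in a unique triple, the analog of Observation \ref{obs:combinatorics_of_fano_plane_upper_7_by_7} holds verbatim, and a row-by-row solution of the orthogonality equations (identical to \eqref{eqn:inner_product_equations}--\eqref{eqn:entries_from_inner_products}, since every row still has exactly three nonzero entries) gives the analog of Lemma \ref{lm:condition_on_vectors}: such vectors $u_1,\dots,u_v$ arise from some $A$ of this form exactly when $\langle u_i,u_j\rangle\langle u_j,u_k\rangle\langle u_k,u_i\rangle$ is real and negative for every triple $\{i,j,k\}$. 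Crucially, Lemma \ref{lm:condition_on_points_of_S2} and Observation \ref{obs:scaling_invariance} are statements about individual triples of vectors in $\mathbb C^2$ and make no reference to the underlying design, so they apply unchanged.

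With these in hand I would mimic the proof of Proposition \ref{prop:complex_lower_bound}. Suppose for contradiction that $G$ has an orthogonal representation in $\mathbb C^{b+2}$; since any representation in smaller dimension embeds into $\mathbb C^{b+2}$ by appending zeros, this rules out $\msr{G}\le b+2$. Such a representation produces $u_1,\dots,u_v\in\mathbb C^2$ satisfying the negativity condition on every triple, hence points $\varphi(u_1),\dots,\varphi(u_v)$ on the sphere $S$ for which every triple $\{i,j,k\}$ of the system satisfies condition \eqref{cond:separating_plane_condition} of Lemma \ref{lm:condition_on_points_of_S2}; in particular, no three points indexed by a triple lie on a common open hemisphere. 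Choosing a plane through the center of $S$ missing all $v$ points and coloring the points by the two hemispheres gives a $2$-coloring of the system. Since $v\ge 7$, Theorem \ref{thm:steiner_3_colorable} forces a monochromatic triple, whose three points then lie on one hemisphere---exactly the forbidden configuration. This contradiction shows $\msr{G}\ge b+3$.

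The one genuinely new ingredient, and the point where the hypothesis $v\ge 7$ enters, is the replacement of Lemma \ref{lem:fano_plane_not_2_colorable} by its general form, Theorem \ref{thm:steiner_3_colorable}: the whole argument rests on the non-$2$-colorability of the system. The main thing I would verify carefully---and which I regard as the principal obstacle to a clean transcription---is that the per-row orthogonality equations genuinely decouple across triples, so that a single scalar condition per triple suffices with no global compatibility constraint. This holds precisely because a fixed point (column) shares a nonzero row with any other fixed point (column) in exactly one triple (row); hence the entry of $A$ in that row is constrained only by the two inner products involving that row's three points, and each triple can be solved independently, exactly as in the derivation of \eqref{eqn:entries_from_inner_products}. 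Everything else is a faithful transcription of the Heawood argument.
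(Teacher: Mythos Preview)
Your proposal is correct and follows essentially the same approach as the paper's own proof sketch: establish the counting identities directly, then transcribe the chain Lemma~\ref{lm:condition_on_terminal_Us}--Observation~\ref{obs:combinatorics_of_fano_plane_upper_7_by_7}--Lemma~\ref{lm:condition_on_vectors}--Lemma~\ref{lm:condition_on_points_of_S2}--Proposition~\ref{prop:complex_lower_bound} with the Fano plane replaced by an arbitrary Steiner triple system, invoking Theorem~\ref{thm:steiner_3_colorable} in place of Lemma~\ref{lem:fano_plane_not_2_colorable} at the final step. Your explicit discussion of why the per-row equations decouple is a welcome elaboration, but the underlying argument is the same.
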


\begin{proof}[Proof sketch]
That $b = { \textstyle \frac 13{v \choose 2} }$ follows immediately from Definition \ref{def:steiner_system}.  The claim that $n=b+v$ is trivial. By definition, $G$ has a biadjacency matrix $M$ of size $b \times v$.
With the zero-nonzero pattern of $M$ playing the role of the upper portion of \eqref{eq:heawood_matrix_with_terminal_u_vectors}, a result analogous to Lemma \ref{lm:condition_on_terminal_Us} is obtained by the same argument.
It follows from Definition \ref{def:steiner_system} that, for any matrix whose initial $b$ rows have a zero-nonzero pattern matching that of $M$, a statement analogous to Observation \ref{obs:combinatorics_of_fano_plane_upper_7_by_7} holds.  Hence, the statement and proof of Lemma \ref{lm:condition_on_vectors} can be adapted in a straightforward way, and Lemma \ref{lm:condition_on_points_of_S2} can then be applied without modification.

The conclusion of the argument then proceeds as in the proof of Proposition \ref{prop:complex_lower_bound}.  That is, any supposed orthogonal representation of $G$ in fewer than $b+3$ dimensions gives rise to $v$ points on the Riemann sphere arranged so as to induce a $2$-coloring of the points of the Steiner triple system in which at least two different colors occur within every triple, contradicting 
Theorem \ref{thm:steiner_3_colorable}.
\end{proof}

We wish to point out the limitations of Theorem \ref{thm:general_steiner_triple_system_analog},  so as to make clear why we did not attempt to develop our main results in such general terms.  To this end, note that the incidence graph $G$ of a Steiner triple system of order $v$ has an independent set (corresponding to the triples of the system) of size $b=\frac 13{v \choose 2}$.  This trivially implies that $\msr{G} \ge b = \frac 13{v \choose 2} = \frac 16(v^2 - v)$, and Theorem \ref{thm:general_steiner_triple_system_analog}  provides only a slight improvement on this bound.  In the case of the Heawood graph, what is interesting is that this improved bound is sharp.  It is unclear whether this remains the case for the incidence graphs of larger Steiner triple systems, however.  Nevertheless, there are many (see \cite[p.\ 15]{comb_design_handbook}) Steiner triple systems of small order, and the application of
Theorem \ref{thm:general_steiner_triple_system_analog} to their incidence graphs may be illuminating.

\section{Conclusion and open questions}\label{sec:conclusion}

Theorem \ref{thm:general_steiner_triple_system_analog} gives a lower bound on $\msr{G}$ whenever $G$ is the incidence graph of a Steiner triple system.  It is natural to compare this bound with that obtained from the positive semidefinite zero forcing number $\zplus{G}$ referenced in Section \ref{sec:intro}.  Table \ref{tab:msr_vs_zf_for_sts} details the result of this comparison for each Steiner triple system of order $v$, with $v > 3$ to avoid the trivial case, up to $v = 15$.
(The next order for which any Steiner triple systems exist is $v=19$, but in this case it would be computationally expensive to determine $\zplus{G}$ for even just one of these, and there are altogether $11{,}084{,}874{,}829$ of them \cite{kaski}.)

\begin{table}
\caption{\label{tab:msr_vs_zf_for_sts}
Comparison of the lower bound on $\msr{G}$ provided by Theorem  \ref{thm:general_steiner_triple_system_analog} with that implied by the positive semidefinite zero forcing number for the incidence graph $G$ of each Steiner triple system of each of the four smallest possible orders.
}

\begin{tabular}{C{0.475in}@{}C{0.475in}C{0.8in}C{0.62in}C{0.8in}C{0.8in}C{1.1in}} \toprule
	\multicolumn{2}{C{0.95in}}{Steiner triple system parameters} &
		Number of Steiner triple systems &
		Number of vertices in $G$ &
		Positive semidefinite zero forcing number &
		Bound from zero forcing &
		Bound from Theorem \ref{thm:general_steiner_triple_system_analog} \\
		
		\cmidrule(lr){1-2}\cmidrule(lr){4-4}\cmidrule(lr){5-5}\cmidrule(lr){6-6}\cmidrule(lr){7-7}

			$v$ & $b$ &
			&
			$n=v+b$ &
			$\zplus{G}$ &
			$n-\zplus{G}$ &
			$b+3 = \frac 13{v \choose 2} + 3$ \\
		\midrule

	7 & 7 & 1 & 14 & 5 & 9 & 10 \\ 
	9 &  12 & 1 & 21 & 7 & 14 & 15 \\ 
	13 & 26 & 2 & 39 & 11 & 28 & 29 \\ 
	15 & 35 & 80 & 50 & 13 & 37 & 38 \\
	\bottomrule
		
\end{tabular}
\end{table}

Table \ref{tab:msr_vs_zf_for_sts} invites some observations.  The first is that in each case the lower bound obtained from Theorem \ref{thm:general_steiner_triple_system_analog} exceeds the lower bound provided by the zero forcing number by exactly one.  This happens to be the case because, for each graph $G$ of the $84$ detailed in the table, $\zplus{G}$ turns out to be $2$ less than the order of the corresponding Steiner triple system.  The question as to whether this holds in general is outside the scope of the present work, but seems interesting.

\begin{question}\label{q:pattern_in_table_continues}
Does $\zplus{G} = v - 2$ whenever $G$ is the incidence graph of a Steiner triple system of order $v$?
\end{question}

An affirmative answer to Question \ref{q:pattern_in_table_continues} would imply that the positive semidefinite zero forcing number of the incidence graph of a Steiner triple system of order $v$ is determined by $v$ alone.  It is open as well whether this may be the case for the positive semidefinite minimum rank itself.

\begin{question}\label{q:noniso_STS_with_differing_msr}
Do there exist two nonisomorphic Steiner triple systems of the same order whose incidence graphs differ in their minimum semidefinite rank?
\end{question}

In particular, although the lower bound provided by Theorem \ref{thm:general_steiner_triple_system_analog} is met by the Heawood graph, we do not expect that this is uniformly the case for the incidence graphs of Steiner triple systems of larger order.  Nevertheless, the question remains open even for the unique Steiner triple system of order $9$.

\begin{question}
Is the Heawood graph the only incidence graph of a Steiner triple system for which the lower bound of Theorem \ref{thm:general_steiner_triple_system_analog} is met?  In particular, with $G$ the incidence graph of the unique Steiner triple system of order $9$,
does an orthogonal representation of $G$ in $\mathbb C^{15}$ exist?
\end{question}

Given a lower bound on the minimum semidefinite rank, the problem of establishing a corresponding upper bound is often
handled via some appropriate geometric construction.  Here this is done for the Heawood graph via Lemma \ref{lm:heptagonal_construction}.  For the incidence graphs of larger Steiner triple systems, however, the problem remains open.

\begin{question}
Can properties of Steiner triple systems in general be exploited to construct  low-dimensional orthogonal representations for their incidence graphs?
\end{question}

Of course, the questions explored here for Steiner triple systems may be considered for the incidence graphs of other families of combinatorial designs.  By definition, such graphs are bipartite, and so a natural analog of Lemma \ref{lm:condition_on_terminal_Us} is always available. No appropriate generalization of Lemma \ref{lm:condition_on_vectors}, however, seems forthcoming in any case beyond that of a Steiner triple system.

For the case of Steiner triple systems, Lemma \ref{lm:condition_on_points_of_S2} gives a useful geometric interpretation of the conditions on the vectors $u_i$ of Lemma \ref{lm:condition_on_vectors}, and this was crucial to the approach used to obtain the lower bound of Theorem \ref{thm:general_steiner_triple_system_analog}.
This raises the question as to whether there can be found some analogous geometric interpretation for these conditions as they apply to vectors in $\mathbb C^k$ for $k \ge 3$.  Such an interpretation might provide an avenue toward
generalizing the lower bound established here for the Heawood graph to the minimum semidefinite ranks of the incidence graphs of other Steiner triple systems.

\section{Acknowledgments}

The present work developed through a collaboration of the authors that began at the 2010 NSF-CBMS Regional Research Conference entitled \textit{The Mutually Beneficial Relationship of Matrices and Graphs}, supported by the IMA and by the NSF through grant number DMS-0938261.  The authors wish to thank those organizations as well as Iowa State University, which hosted the meeting.


\bibliographystyle{plain}
\bibliography{Orthrep-Steiner-arXiv}

\end{document}